\documentclass[11pt]{amsart}
\usepackage{geometry}                
\geometry{letterpaper}                   
\usepackage{graphicx}
\usepackage{amssymb}
\usepackage{epstopdf}
\usepackage{xypic}
\usepackage{pstricks, pst-node, pst-tree}

\newtheorem{proposition}{Proposition}
\newtheorem{corollary}{Corollary}
\newtheorem{theorem}{Theorem}
\newtheorem{lemma}{Lemma}
\newtheorem{conjecture}{Conjecture}
\newtheorem{question}{Question}

\theoremstyle{definition}
\newtheorem{definition}{Definition}
\newtheorem{example}{Example}
\newtheorem{remark}{Remark}

\newcommand{\calC}{\mathcal C}

\newcommand{\NN}{\mathbb N}

\newcommand{\Hom}{\text{\rm Hom}}
\newcommand{\End}{\text{\rm End}}



\DeclareGraphicsRule{.tif}{png}{.png}{`convert #1 `dirname #1`/`basename #1 .tif`.png}

\title{Products in a Category with One Object}
\author{Aaron Gray}
\address{National Security Agency}
\author{Keith Pardue}
\address{National Security Agency}
\subjclass[2010]{18D35}
\email{pardue@member.ams.org}

\date{18 August 2016}                                           

\begin{document}

\begin{abstract}
We study monoids equipped with a second binary operation that captures the structure of the endomorphisms of an object $X$ such that $X=X\times X$. We construct a universal monoid of this type and examine some of its rich combinatorial structure. We show that if $X$ has a nontrivial endomorphism and $X=X\times X$, then every finite monoid has a faithful action on $X$.
\end{abstract}

\maketitle
\section{Introduction}

In a category $\calC$, consider an object $X$ such that $X$ is (isomorphic to) $X\times X$. In familiar categories, such objects are either very trivial or very large. In the category of sets, the sets with this property have $0$, $1$ or infinitely many elements. A vector space with this property has $0$ or infinite dimension. To say that $X$ is a product of two copies of itself is to say that there are two distinguished elements $\pi_1,\pi_2\in\End(X)$ such that for any object $Y$, the function
$$\Pi_Y:\Hom(Y,X)\rightarrow\Hom(Y,X)\times\Hom(Y,X)$$
given by $\Pi_Y(f)=(\pi_1f,\pi_2f)$ is a bijection. 
The inverse bijection $\Sigma:\Hom(Y,X)^2\rightarrow\Hom(Y,X)$ satisfies the identities $\pi_1\Sigma(a,b)=a$ and $\pi_2\Sigma(a,b)=b$ for all $a,b\in\Hom(Y,X)$. 
\[\xymatrix{
Y\ar@{-->}[dr]^{\Sigma(a,b)}\ar@/_1pc/[ddr]_a\ar@/^1pc/[rrd]^b &&\\
& X\ar[d]^{\pi_1}\ar[r]_{\pi_2} & X\\
& X &
}\]
The object $X$ retains this property in the full subcategory of $\calC$ in which $X$ is the only object, and the monoid $\End(X)=\Hom(X,X)$ is the set of all morphisms. Conversely, we make the following definition.

\begin{definition}\label{Def:CP} A \emph{categorical product (CP)} monoid $(M,\Sigma,\pi_1,\pi_2)$ is a monoid $M$ with distinguished elements $\pi_1,\pi_2\in M$ and a bijective function $\Sigma:M^2\rightarrow M$ such that $\pi_1\Sigma(a,b)=a$ and $\pi_2\Sigma(a,b)=b$ for all $a,b\in M$.\end{definition}

From a CP monoid we may make a category with one (arbitrary and redundant) object $X$ for which $M$ is the monoid of endomorphisms. In this category, $X=X\times X$. We give an alternative characterization of CP monoids in Proposition \ref{Prop:CPalt}. Example \ref{Ex:VS} in the next section is an example of a CP monoid in the spirit of the discussion above.

The main result of this paper is a construction of a universal CP monoid $U$ in Definition \ref{Def:U} and Theorem \ref{thm:Universal}. This monoid has a rich combinatorial structure that has surprised and delighted us, as we were led to it by purely abstract speculations about the properties described above.

Here is an outline of the paper. In Sections 2 and 3, we introduce basic notions. Section 2 concerns categorical quasiproduct (CQP) monoids. These satisfy a relaxation of the CP condition and are important to our study of CP monoids. Section 3 has basic facts about free magmas in the form that we use them in this paper. The heart of the paper is Section 4 in which we construct the universal CQP monoid $T$ and the universal CP monoid $U$; these are initial objects in their respective categories. In Section 5 we characterize left and right invertible elements of $T$ and $U$ and then give a bijection between units in a CP monoid $M$ and the different CP structures on $M$. In Section 6 we show that if $M$ is a non-zero CP monoid, then \emph{every} finite monoid $N$ has an injective homomorphism into $M$. This implies that if $X$ is an object of some category such that $X=X\times X$, then either the only endomorphism of $X$ is the identity, or every finite monoid $N$ has a faithful action on $X$. This makes precise the dichotomy between trivial objects that are products of themselves, and very large objects that are products of themselves. In the final section, we give some open problems.

We could consider all of these questions for coproducts as well as products, and indeed everything in this paper translates to that setting in the most straightforward way. Since coproducts are products in the opposite category, we need only replace monoids by their opposite monoids and swap left and right actions to obtain the parallel constructions and theorems for categorical coproduct monoids. We will not address coproducts explictly any further.

S. Pardue, R. Rodriguez, and M. Sweedler all gave us helpful comments that guided parts of this paper, and we are grateful for their help.

In this paper, we develop several particular monoids, a homomorphism and a congruence that we repeatedly refer to throughout the paper. We close this introduction with an index so that the reader can easily find the descriptions of these objects.

\begin{itemize}
\item $B$: The \emph{branch} monoid introduced in Example \ref{Ex:FBS}.
\item $F$: A free monoid on $\pi_1$ and $\pi_2$ introduced in Example \ref{Ex:FBS}.
\item $S$: A semiring built from subsets of $B$ and whose multiplicative structure is a CQP monoid introduced in Example \ref{Ex:FBS}.
\item $T$: The universal CQP monoid constructed in the beginning of Section \ref{Section:Universal}.
\item $U$: The universal CP monoid defined as a quotient of $T$ in Definition \ref{Def:U}.
\item $\beta$: The branch homomorphism from $T$ to $S$ defined in Definition \ref{Def:Beta}.
\item $\equiv$: A congruence on $T$ whose quotient is $U$, defined in Definition \ref{Def:Congruence}.
\end{itemize}

\section{Categorical Quasiproduct Monoids}\label{Section:CQPMonoids}

In this paper, we write all monoids multiplicatively with identity $1$. If $M$ has just one element, then we write $M=0$. 

\begin{proposition}\label{Prop:CPalt} Let $M$ be a monoid, $\pi_1,\pi_2\in M$ and $\Sigma:M^2\rightarrow M$ be a function. Then $(M,\Sigma,\pi_1,\pi_2)$ is a CP monoid if and only if $\Sigma,\pi_1$ and $\pi_2$ satisfy the following identities:
\begin{enumerate}
\item $\pi_1\Sigma(a,b)=a$ for all $a,b\in M$,
\item $\pi_2\Sigma(a,b)=b$ for all $a,b\in M$,
\item (Right Distibutivity) $\Sigma(a,b)c=\Sigma(ac,bc)$ for all $a,b,c\in M$, and
\item (Partition of Unity) $\Sigma(\pi_1,\pi_2)=1$.
\end{enumerate}
\end{proposition}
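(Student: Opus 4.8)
The plan is to treat the statement as an equivalence and to organize everything around the ``projection'' map $\Pi\colon M\to M^2$ defined by $\Pi(m)=(\pi_1 m,\pi_2 m)$. The content of conditions (1) and (2) is precisely that $\Pi$ is a left inverse of $\Sigma$, since $\Pi(\Sigma(a,b))=(\pi_1\Sigma(a,b),\pi_2\Sigma(a,b))=(a,b)$. Because (1) and (2) appear in both characterizations, the real work is to show that, in their presence, bijectivity of $\Sigma$ is equivalent to the conjunction of (3) and (4). I expect the linchpin of both directions to be the single identity $\Sigma(\pi_1 m,\pi_2 m)=m$ for all $m\in M$, i.e. the assertion that $\Pi$ is also a right inverse of $\Sigma$.

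For the forward direction I would assume $(M,\Sigma,\pi_1,\pi_2)$ is a CP monoid. Then (1) and (2) hold by definition, and since $\Sigma$ is a bijection admitting the left inverse $\Pi$, that left inverse is automatically two-sided; hence $\Sigma\circ\Pi=\id_M$, which is exactly the identity $\Sigma(\pi_1 m,\pi_2 m)=m$. Setting $m=1$ and using $\pi_i\cdot 1=\pi_i$ yields (4). For (3), I would put $m=\Sigma(a,b)c$, compute $\pi_1 m=ac$ and $\pi_2 m=bc$ from (1), (2) and associativity, and feed this back into the identity to get $\Sigma(ac,bc)=\Sigma(\pi_1 m,\pi_2 m)=m=\Sigma(a,b)c$.

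For the reverse direction I would assume (1)--(4) and show only that $\Sigma$ is a bijection, since (1) and (2) are the remaining CP axioms. Injectivity is immediate: (1) and (2) give $\Pi\circ\Sigma=\id_{M^2}$, so $\Sigma$ has a left inverse. For surjectivity I would recover the key identity directly from the hypotheses: applying (3) with $a=\pi_1$, $b=\pi_2$, $c=m$ gives $\Sigma(\pi_1,\pi_2)m=\Sigma(\pi_1 m,\pi_2 m)$, and then (4) replaces $\Sigma(\pi_1,\pi_2)$ by $1$, so $\Sigma(\pi_1 m,\pi_2 m)=m$. This exhibits $\Pi$ as a right inverse of $\Sigma$, so $\Sigma$ is onto and hence bijective.

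There is no serious obstacle here; the one point worth stating carefully is the observation that drives everything, namely that conditions (3) and (4) are exactly a concrete repackaging of the surjectivity identity $\Sigma(\pi_1 m,\pi_2 m)=m$, while (1) and (2) encode injectivity. Once this is isolated, each implication is a two-line manipulation. The only mild subtlety is invoking ``a left inverse of a bijection is its two-sided inverse'' in the forward direction, rather than attempting to verify $\Sigma\circ\Pi=\id_M$ by hand.
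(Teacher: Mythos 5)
Your proof is correct and takes essentially the same approach as the paper: both arguments pivot on $\Pi(m)=(\pi_1 m,\pi_2 m)$ being the two-sided inverse of $\Sigma$, and the reverse direction (injectivity from (1)--(2), surjectivity via $\Sigma(\pi_1 m,\pi_2 m)=\Sigma(\pi_1,\pi_2)m=m$) is identical to the paper's. The only difference is cosmetic: in the forward direction the paper applies $\Pi$ to both sides of (3) and (4) and checks equality, while you invoke the identity $\Sigma\circ\Pi=\id_M$ and substitute -- two phrasings of the same fact.
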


\begin{proof} If $(M,\Sigma,\pi_1,\pi_2)$ is a CP monoid, then (1) and (2) are satisfied by definition. Furthermore, $\Sigma$ is bijective and $\Sigma^{-1}=\Pi$ where $\Pi(m)=(\pi_1m,\pi_2m)$. So to prove identities (3) and (4), it suffices to check equality after applying $\Pi$ to both sides. For identity (3), we obtain $(ac,bc)$ on each side, while for identity (4) we obtain $(\pi_1,\pi_2)$ on each side, proving both identities.

Now, say that $M$, $\Sigma$, $\pi_1$ and $\pi_2$ satisfy all four identities. To see that $(M,\Sigma,\pi_1,\pi_2)$ is a CP monoid, we need only see that $\Sigma$ is bijective. Taking $\Pi$ as above, identities (1) and (2) show that $\Pi\Sigma(a,b)=(a,b)$ for all $a,b\in M$ so that $\Sigma$ is injective. But, if $m\in M$, then identities (3) and (4) show that $\Sigma(\pi_1m,\pi_2m)=\Sigma(\pi_1,\pi_2)m=m$, so that $\Sigma$ is also surjective.
\end{proof}

\begin{remark}\label{Rem:UA} From the point of view of universal algebra, a CP monoid is an algebra with three 0-ary operations ($1$, $\pi_1$ and $\pi_2$), two binary operations (the monoid operation and $\Sigma$) and satisfying seven identities (associativity, left and right identity and the four identities in Proposition \ref{Prop:CPalt}). In the main construction of this paper it is convenient to put aside partition of unity to impose later. So, we make the following auxiliary definition.
\end{remark}

\begin{definition}\label{Def:CQP} A \emph{categorical quasiproduct (CQP)} monoid $(M,\Sigma,\pi_1,\pi_2)$ is a monoid $M$, together with two elements $\pi_1,\pi_2\in M$ and a function $\Sigma:M^2\rightarrow M$ such that for all $a,b,c\in M$:
\begin{enumerate}
\item $\pi_1\Sigma(a,b)=a$
\item $\pi_2\Sigma(a,b)=b$
\item $\Sigma(ac,bc)=\Sigma(a,b)c$.
\end{enumerate}
\end{definition}

We often write \lq\lq$M$ is a CP (or CQP) monoid" when $\Sigma$, $\pi_1$ and $\pi_2$ are clear from context.

\begin{remark}\label{Rem:Pi} Note that in Definition \ref{Def:CP} we require $\Sigma$ to be a bijection, but do not require this in Definition \ref{Def:CQP}. However, the function $\Pi(m)=(\pi_1m,\pi_2m)$ from the proof of Proposition \ref{Prop:CPalt} is a left inverse for $\Sigma$. Thus, in a CQP monoid $\Sigma$ is always injective. In a CP monoid, $\Pi$ is the inverse for $\Sigma$. Although $\Sigma(\pi_1,\pi_2)$ is not $1$ in a CQP monoid that is not a CP monoid, the following lemma shows that it is very close to being a left identity.
\end{remark}

\begin{lemma}\label{Lem:Left1} Let $(M,\Sigma,\pi_1,\pi_2)$ be a CQP monoid and let $m\in M$ be in the image of $\Sigma$. Then $\Sigma(\pi_1,\pi_2)m=m$.
\end{lemma}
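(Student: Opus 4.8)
The plan is to use the hypothesis that $m$ is in the image of $\Sigma$ to write $m=\Sigma(a,b)$ for some $a,b\in M$, and then to manipulate $\Sigma(\pi_1,\pi_2)m$ using right distributivity to pull the factor $m$ inside the $\Sigma$. Concretely, applying identity (3) of Definition \ref{Def:CQP} with the pair $(\pi_1,\pi_2)$ in the first two slots and $m$ as the right factor gives $\Sigma(\pi_1,\pi_2)m=\Sigma(\pi_1 m,\pi_2 m)$. This is the one real idea in the argument: rewriting the left-multiplication by $\Sigma(\pi_1,\pi_2)$ as an application of $\Sigma$ to the components $\pi_1 m$ and $\pi_2 m$.

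Next I would identify those two components. Since $m=\Sigma(a,b)$, identities (1) and (2) give $\pi_1 m=\pi_1\Sigma(a,b)=a$ and $\pi_2 m=\pi_2\Sigma(a,b)=b$; equivalently, this is exactly the statement from Remark \ref{Rem:Pi} that $\Pi$ is a left inverse of $\Sigma$, so that $\Pi(m)=(a,b)$. Substituting these into the expression from the previous step yields $\Sigma(\pi_1 m,\pi_2 m)=\Sigma(a,b)=m$, which is the desired identity $\Sigma(\pi_1,\pi_2)m=m$.

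There is essentially no obstacle here beyond choosing the right move at the start: the proof is a two-line computation once one sees that right distributivity should be read from left to right (absorbing $m$ into the arguments of $\Sigma$) rather than the reverse. It is worth noting explicitly why the hypothesis that $m$ lies in the image of $\Sigma$ is needed: for a general $m$ we would only get $\Sigma(\pi_1,\pi_2)m=\Sigma(\pi_1 m,\pi_2 m)$, and without knowing that $(\pi_1 m,\pi_2 m)$ reconstitutes $m$ under $\Sigma$ — which is precisely the statement that $m$ is in the image of $\Sigma$, since $\Pi$ is only a one-sided inverse in the CQP setting — we cannot conclude. This is exactly the gap that becomes an equality once partition of unity is imposed and $M$ is a genuine CP monoid.
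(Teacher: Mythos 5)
Your proof is correct and is essentially identical to the paper's: both write $m=\Sigma(a,b)$, use right distributivity to obtain $\Sigma(\pi_1,\pi_2)m=\Sigma(\pi_1 m,\pi_2 m)$, and then apply identities (1) and (2) to recover $\Sigma(a,b)=m$. Your closing remark about why the image hypothesis is needed is a nice addition, but the core argument matches the paper exactly.
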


\begin{proof} Let $m=\Sigma(m_1,m_2)$. Then
$$\Sigma(\pi_1,\pi_2)m=\Sigma(\pi_1m,\pi_2m)=\Sigma(m_1,m_2)=m.$$
\end{proof}

\begin{remark}\label{Rem:Generalization} In Definition \ref{Def:dCP} we define a categorical $d$-fold product ($d$-CP) monoid, with $d$ distinguished elements and analogous $\Sigma:M^d\rightarrow M$. We could also define a $d$-CQP monoid, but do not have occasion to use one in this paper. Definitions \ref{Def:CP} and \ref{Def:CQP} are for the most interesting case of $d=2$. All of the arguments in this paper have straightforward generalizations to the case of any finite $d\ge 2$. For the sake of clarity, we just develop the case of $d=2$, leaving the generalizations to the interested reader.
\end{remark}

\begin{example}\label{Ex:VS} Let $V$ be a countable dimensional vector space with basis $\{e_0,e_1,e_2,\dots\}$ and let $M=\End(V)$. Let $\pi_1,\pi_2,\pi_1^*,\pi_2^*\in M$ be defined by
$$
\begin{array}{ll}
\pi_1(e_{2i})=0 & \pi_2(e_{2i})=e_i\\
\pi_1(e_{2i+1})=e_i & \pi_2(e_{2i+1})=0\\
\pi_1^*(e_i)=e_{2i+1} & \pi_2^*(e_i)=e_{2i}
\end{array}
$$
for $i\ge 0$. Then $\pi_j\pi_j^*=1$, $\pi_j\pi_k^*=0$ if $j\ne k$ and $\pi_1^*\pi_1+\pi_2^*\pi_2=1$. Define $\Sigma:M^2\rightarrow M$ by
$$\Sigma(f,g)=\pi_1^*f+\pi_2^*g.$$
All four of the identities of Proposition \ref{Prop:CPalt} are satisfied, so that $(M,\Sigma,\pi_1,\pi_2)$ is a CP monoid.
\end{example}

\begin{example}\label{Ex:FBS}
This example describes the monoids $F$, $B$ and $S$ that appear several times in this paper.

Let $F$ be the free monoid on symbols $\pi_1$ and $\pi_2$. Let $B$ be the monoid generated by symbols $\pi_1,\pi_2,\pi_1^*,\pi_2^*$ and $0$ subject only to the following relations:
\begin{enumerate}
\item $0b=b0=0$ for all $b\in B$ and
\item $\pi_i\pi_j^*=\delta_{ij}$, where $\delta_{ij}=0,1\in B$ is the Kronecker Delta.
\end{enumerate}

$F$ is a submonoid of $B$. If we further write $\pi_i^{**}=\pi_i$, $0^*=0$ and $(p_1\cdots p_d)^*=p_d^*\cdots p_1^*$, where each $p_i$ is $\pi_1$ or $\pi_2$, then $*$ is an antiautomorphism of $B$. If $w\in F$, then $ww^*=1$. Furthermore, every non-zero $b\in B$ can be uniquely written as $b=v^*w$ for $v,w\in F$.

Let $S=\{X\subseteq B|0\in X\}$. Then $S$ is a monoid under the multiplication law:
$$XY=\{xy|x\in X, y\in Y\}.$$
The identity in $S$ is $\{0,1\}$ and $S$ has an absorbing element $\{0\}$. There is an injective homomorphism $B\rightarrow S$ given by $b\mapsto\{0,b\}$ and we will often conflate $B$ with its image in $S$, writing $\{0,b\}$ as simply $b$. For $b\in B$ and $X\in S$, $bX=\{0,b\}X$ and $Xb=X\{0,b\}$, so this convention should cause no confusion.

It is easy to see that multiplication distributes on either side over union $\cup$: $X(Y\cup Z)=XY\cup XZ$ and $(Y\cup Z)X=YX\cup ZX$. Indeed, $S$ is a semiring with additive structure given by $\cup$.

This distributive property, together with $\pi_i\pi_j^*=\delta_{ij}$, gives that $S$ is a CQP monoid with $\Sigma:S^2\rightarrow S$ given by
$$\Sigma(X,Y)=\pi_1^*X\cup\pi_2^*Y.$$
However, we see from Proposition \ref{Prop:CPalt} that $S$ is not a CP monoid since 
$$\Sigma(\pi_1,\pi_2)=\{0,\pi_1^*\pi_1,\pi_2^*\pi_2\}\ne 1.$$
\end{example}

\begin{remark} In the construction of $S$ from $B$ in Example \ref{Ex:FBS}, we could have harmlessly added the restriction that the elements of $S$ be finite sets containing $0$, rather than arbitrary sets containing $0$. With or without the finiteness condition, the construction generalizes to a functor from the category of monoids with absorbing element to the category of semirings. We do not have occasion to apply this functor to other monoids in this paper.
\end{remark}

That $S$ in Example \ref{Ex:FBS} contains a free monoid generated by $\pi_1$ and $\pi_2$ is not a special feature. This is shared by all non-zero CQP monoids.

\begin{proposition}\label{Prop:Free} Let $(M,\Sigma,\pi_1,\pi_2)$ be a non-zero CQP monoid. Then $\pi_1$ and $\pi_2$ are right-invertible, but not left-invertible. Furthermore, $\pi_1$ and $\pi_2$ form a basis for a free submonoid of $M$.
\end{proposition}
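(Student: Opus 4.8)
The plan is to extract everything from the three CQP identities together with the standing hypothesis $M\ne 0$ (i.e.\ $|M|\ge 2$). The two invertibility claims are quick. For right-invertibility, identity (1) with $a=1$ gives $\pi_1\Sigma(1,b)=1$, so $\Sigma(1,b)$ is a right inverse of $\pi_1$ for any $b$; symmetrically identity (2) gives $\pi_2\Sigma(a,1)=1$. For the failure of left-invertibility I would argue by contradiction: if $x\pi_1=1$, then left-multiplying identity (1) by $x$ yields $\Sigma(a,b)=x\pi_1\Sigma(a,b)=xa$, so $\Sigma(a,b)$ does not depend on its second argument; but then identity (2) gives $b=\pi_2\Sigma(a,b)=\pi_2xa$ for every $b$ with $a$ fixed, forcing $M$ to be a single element and contradicting $M\ne 0$. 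The same computation with the two coordinates swapped rules out a left inverse of $\pi_2$.

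For freeness I would show that the evaluation map from the free monoid on $\{\pi_1,\pi_2\}$ into $M$ is injective, by induction on the total length of two words $u,v$ with equal image. If exactly one of them is empty, say $u=1$ and $v=v'\pi_j$, then $v'\pi_j=1$ exhibits $v'$ as a left inverse of $\pi_j$, which we have just excluded; so either both are empty (and we are done) or both are nonempty. Writing $u=u'\pi_i$ and $v=v'\pi_j$ by their last letters, the \emph{matching} case $i=j$ is handled by right-multiplying $u'\pi_i=v'\pi_i$ by the right inverse of $\pi_i$ found above, which cancels $\pi_i$ and leaves $u'=v'$; the inductive hypothesis then gives $u=v$ in the free monoid.

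The main obstacle is the \emph{mismatched} case $u'\pi_1=v'\pi_2$, which I must show is impossible. The trick I would use exploits that the right inverses of $\pi_1$ come in a large family: for each $s\in M$ the element $\Sigma(1,s)$ is a right inverse of $\pi_1$ with $\pi_2\Sigma(1,s)=s$, so right-multiplying $u'\pi_1=v'\pi_2$ by it gives $u'=v's$. As $s$ ranges over all of $M$ this says $v'M=\{u'\}$ is a single point. On the other hand $v'$, being a word, is right-invertible, and left-multiplication by a right-invertible element is surjective (for any right inverse $\overline{v'}$ of $v'$ and any $m$ we have $m=v'(\overline{v'}\,m)$), so $v'M=M$. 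Hence $|M|=1$, contradicting $M\ne 0$. This contradiction eliminates the mismatched case and completes the induction, so $\{\pi_1,\pi_2\}$ freely generates a submonoid. The one point to watch is that the right inverses driving the argument must be produced through the identities — always as $\Sigma(1,-)$ or $\Sigma(-,1)$ — and it is precisely the freedom to vary them over all of $M$ that powers the mismatched case.
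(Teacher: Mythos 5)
Your proof is correct, and while its skeleton (produce right inverses, exclude left inverses, then induct on word length) matches the paper's, both of the crucial sub-arguments are genuinely different. For the non-existence of left inverses, the paper sets $\omega=\Sigma(1,1)$, so that $\pi_1\omega=\pi_2\omega=1$, and argues that a left inverse $\sigma$ of $\pi_i$ would force $\sigma=\sigma\pi_i\omega=\omega$ and hence $\pi_j=\pi_j\omega\pi_i=\pi_i$, contradicting $\pi_1\ne\pi_2$; you instead observe that $x\pi_1=1$ makes $\Sigma(a,b)=xa$ independent of $b$, collapsing $M$ to a point --- equally elementary, and it makes the role of the hypothesis $M\ne 0$ more transparent. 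The real divergence is in the mismatched-last-letter case. The paper multiplies the offending equation on the right by the single element $\omega'=\Sigma(1,\pi_2)$, which erases a terminal $\pi_1$ (since $\pi_1\omega'=1$) while preserving a terminal $\pi_2$ (since $\pi_2\omega'=\pi_2$), so the two sides become words of lengths $s-1$ and $s$, and the same induction then yields the absurdity $s-1=s$. You instead vary the right inverse over the whole family $\Sigma(1,s)$, $s\in M$, to get $v's=u'$ for every $s$, i.e.\ $v'M=\{u'\}$, and play this against the surjectivity of left multiplication by the right-invertible element $v'$ to force $|M|=1$. The paper's trick is more economical --- one well-chosen element, with the length bookkeeping absorbed into the induction already running --- while yours is self-contained (no inductive appeal in that case) and isolates a reusable principle: in a non-zero CQP monoid, no right-invertible element can collapse $M$ to a point under left multiplication. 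A further minor difference is that the paper strips the last letters by right-multiplying by $\omega$ \emph{before} knowing whether they agree (inducting on the shorter length), whereas you cancel only in the matching case and induct on total length; both organizations are sound.
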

\begin{proof} First note that the $\pi_1$ and $\pi_2$ are distinct. Otherwise for any elements $a,b\in M$, we have that
$$a=\pi_1\Sigma(a,b)=\pi_2\Sigma(a,b)=b$$
and $M$ has but one element, contradicting that $M$ is non-zero.

Now, let $\omega=\Sigma(1,1)$. Then $\pi_1\omega=\pi_2\omega =1$ so that $\pi_1$ and $\pi_2$ are \emph{simultaneously} right invertible. If one $\pi_i$ has a left inverse $\sigma$, then $\sigma =\sigma(\pi_i\omega)=(\sigma\pi_i)\omega =\omega$. Then for $j\ne i$ we have that $\pi_j=\pi_j(\omega\pi_i)=(\pi_j\omega)\pi_i=\pi_i$, a contradiction. So, neither $\pi_i$ has a left inverse.

Now, to see that $\pi_1$ and $\pi_2$ generate a free submonoid of $M$ we must see that the equation
$$\pi_{i_1}\cdots\pi_{i_s}=\pi_{j_1}\cdots\pi_{j_t}$$
implies that $s=t$ and $i_\ell =j_\ell$ for $1\le\ell\le s$. We may assume without loss of generality that $s\le t$. If $s=0$, so that the left hand side is $1$, then we must also have that $t=0$, since otherwise $\pi_{j_1}\cdots\pi_{j_{t-1}}$ is a left inverse of $\pi_{j_t}$, a contradiction. We now proceed by induction on $s$.

Let $0<s\le t$. Multiply both sides of the equation $\pi_{i_1}\cdots\pi_{i_s}=\pi_{j_1}\cdots\pi_{j_t}$ by $\omega=\Sigma(1,1)$ on the right to obtain $\pi_{i_1}\cdots\pi_{i_{s-1}}=\pi_{j_1}\cdots\pi_{j_{t-1}}$. Then by induction on $s$ we have that $t=s$ and $i_\ell=j_\ell$ for $\ell\le s-1$. It remains to see that $i_s=j_s$. If $i_s\ne j_s$ then without loss of generality, we may assume that $i_s=1$ and $j_s=2$.  Multiplying both sides of the original equation on the right by $\omega^\prime=\Sigma(1, \pi_2)$ we obtain that $\pi_{i_1}\cdots\pi_{i_{s-1}}=\pi_{j_1}\cdots\pi_{j_s}$ so that $s-1=s$ by induction, a contradiction if there ever was one! Thus $i_s=j_s$, completing the proof. 
\end{proof}

Recall that monoid homomorphisms must take $1$ to $1$ as well as respect multiplication. We define homomorphisms of CQP monoids in the canonical way:

\begin{definition}\label{Def:Hom} If $(M,\Sigma,\pi_1,\pi_2)$ and $(N,\Phi,\tau_1,\tau_2)$ are CQP monoids, then a CQP monoid homomorphism from $M$ to $N$ is a monoid homomorphism $f:M\rightarrow N$ such that $f(\pi_i)=\tau_i$ for $i=1,2$ and $f(\Sigma(a,b))=\Phi(f(a),f(b))$ for all $a,b\in M$. A CP monoid homomorphism is a CQP monoid homomorphism between CP monoids.
\end{definition}

\begin{remark}\label{Rem:Cat} The characterization of CP monoids via partition of unity in Proposition \ref{Prop:CPalt} shows that if $f:M\rightarrow N$ is a CQP monoid homomorphism and $M$ is a CP monoid, then $N$ is as well.

With this definition of homomorphism, CQP monoids form a category and CP monoids form a full subcategory. A 0 monoid is trivially a CP monoid and is a final object in both categories. These categories have initial objects $T$ and $U$ respectively that we construct in Section \ref{Section:Universal}. Remark \ref{Rem:Functor} describes a natural functor from CQP monoids to CP monoids. However, we have not investigated other constructions in these categories, such as limits and colimits.
\end{remark}

\begin{definition}\label{Def:Cong} If $(M,\Sigma,\pi_1,\pi_2)$ is a CQP monoid, then an equivalence relation $\sim$ on $M$ is a \emph{congruence} if whenever $a\sim b$ and $c\sim d$ we also have that $ac\sim bd$ and $\Sigma(a,c)\sim\Sigma(b,d)$.\end{definition}

\begin{remark}\label{Rem:Functor} Definition \ref{Def:Cong} is a specialization of the standard definition of a congruence relation in universal algebra, and we now review some basic facts as they apply to this paper. See \cite{G} for a more general treatment. Congruences $\sim$ are the equivalence relations whose equivalence classes $M/\!\!\sim$ inherit from $M$ the structure of a CQP monoid. For a congruence $\sim$, the function $M\rightarrow M/\!\!\sim$ taking an element to its equivalence class is a CQP monoid homomorphism and if $\phi:M\rightarrow N$ is any other CQP monoid homomorpism such that $\phi(a)=\phi(b)$ whenever $a\sim b$, then $\phi$ factors uniquely through $M\rightarrow M/\!\!\sim$.

Given any $a,b\in M$, there is a unique smallest congruence such that $a\sim b$, known as a principal congruence since it has one generator. In particular, if we let $\sim_{cp}$ be the principal congruence generated by $\Sigma(\pi_1,\pi_2)\sim_{cp}1$, then by Proposition \ref{Prop:CPalt} $M_{cp}:=M/\!\!\sim_{cp}$ is a CP monoid and by the paragraph above, the homomorphism $M\rightarrow M_{cp}$ is universal in the sense that for any CP monoid $N$ the induced map $\Hom(M_{cp},N)\rightarrow\Hom(M,N)$ is a bijection.

Furthermore, the construction of $M_{cp}$ respects CQP monoid homomorphisms, so that we obtain a functor {\bf cp} from the category of CQP monoids to the full subcategory of CP monoids that is the identity on CP monoids, together with a natural transformation from the identity functor to {\bf cp}.
\end{remark}

\section{Free Magmas and Trees}\label{Section:Free}
\begin{definition} A \emph{magma} $(Y,\Sigma)$ is a set $Y$ with a function $\Sigma:Y^2\rightarrow Y$. If $(Y,\Sigma)$ and $(Z,\Sigma^\prime)$ are magmas, then a function $f:Y\rightarrow Z$ such that $f(\Sigma(a,b))=\Sigma^\prime(f(a),f(b))$ for all $a,b\in Y$ is a \emph{magma homomorphism}. For a set $X$, the elements of the \emph{free magma} $Y_X$ with basis $X$ are partitioned by positive integers and defined as follows. The elements of $Y_X$ of degree $1$ are the elements of $X$. The elements of $Y_X$ of degree $d>1$ are ordered pairs $(A,B)$, where $A$ and $B$ are lower degree elements of $Y_X$ whose degrees sum to $d$. $\Sigma:Y_X^2\rightarrow Y_X$ is given by $\Sigma(A,B)=(A,B)\in Y_X$.
\end{definition}
 
 Thus, if $x,y,z\in X$ then $A=((y,z),x)$ and $B=(z,x)$ are two elements of $Y_X$ and $\Sigma(A,B)=(((y,z),x),(z,x))$. These definitions come from sections 1.1 and 7.1 of the first chapter of \cite{B}, except that Bourbaki writes \lq\lq length" where we write \lq\lq degree"; we reserve \lq\lq length" for elements of free monoids. The basic facts that we discuss in this section are obvious, but can also be found in section 7.1 of the first chapter of \cite {B}.
 
 However, in this paper, we view the elements of $Y_X$ in an equivalent way as the finite nonempty ordered binary trees with leaves colored by elements of $X$. Then, for such trees $A$ with root $r_A$ and $B$ with root $r_B$, $\Sigma(A,B)=C$ where $C$ is a new tree whose set of vertices is the disjoint union of the vertices of $A$, the vertices of $B$ and $\{r_C\}$. $r_C$ is the root of $C$ with left child $r_A$ and right child $r_B$. All other parent/child relationships and colorings are as in $A$ and in $B$. Taking $A$ and $B$ as in the previous paragraph, they and their composition are represented thus:

\[\xymatrix{
&&&\circ\ar@{-}[ld]\ar@{-}[rd]&&&&\circ\ar@{-}[ld]\ar@{-}[rd]&\\
A:&&\circ\ar@{-}[ld]\ar@{-}[rd]&&x&B:&z&&x\\
&y&&z&&&&&
}\]
\[\xymatrix{
&&&&&\circ\ar@{-}[lld]\ar@{-}[rrd]&&\\
\Sigma(A,B):&&&\circ\ar@{-}[ld]\ar@{-}[rd]&&&&\circ\ar@{-}[ld]\ar@{-}[rd]&\\
&&\circ\ar@{-}[ld]\ar@{-}[rd]&&x&&z&&x\\
&y&&z&&&&&
}\]

Then $\deg A$ becomes the number of leaves of the binary tree structure of $A$. We continue to identify the elements of $Y_X$ of degree $1$ with elements of $X$.

\begin{remark}\label{Rem:DegFactor} For $A\in Y_X$, either $A\in X$ or there are unique $A_1,A_2\in Y_X$ such that $A=\Sigma(A_1,A_2)$. It follows easily that for every function $\phi_0:X\rightarrow Z$, where $(Z,\Sigma^\prime)$ is a magma, there is a unique extension of $\phi_0$ to a magma homomorphism $\phi:Y_X\rightarrow Z$ inductively described for $A\in Y_X$ of degree greater than $1$ by $\phi(A)=\Sigma^\prime(\phi(A_1),\phi(A_2))$. This justifies the name \lq\lq free" magma.
\end{remark}

\section{Universal Solutions}\label{Section:Universal}
In this section we construct CQP monoids $T$ and $U$ such that $T$ is universal among CQP monoids, and $U$ is universal among CP monoids.

Here is a sketch of the constructions. Let $F$ be the free monoid on symbols $\pi_1$ and $\pi_2$ as in Example \ref{Ex:FBS}. Let $T=Y_F$ be the free magma with basis $F$ and let $\Sigma :T\times T\rightarrow T$ be the composition law in $T$ as in Section \ref{Section:Free}. We define a left action of $F$ on $T$ with the property that $\pi_1\Sigma(A,B)=A$ and $\pi_2\Sigma(A,B)=B$. Then, we extend this action to a binary operation on $T$ giving $T$ a monoid structure. With this monoid structure, $T$ is the initial object in the category of CQP monoids. Finally, we explicitly describe an equivalence relation $\equiv$ on $T$  and show that it is the same as the congruence $\sim_{cp}$ described in Remark \ref{Rem:Functor}. The quotient CQP monoid $U=T_{cp}$ is the initial object in the category of CP monoids.

For $F$ and $T$ as in the preceeding paragraph, we now define a left monoid action of $F$ on $T$ as a set. Since $F$ is free, an action by $F$ is uniquely determined by the actions of $\pi_1$ and $\pi_2$, which we may freely choose. 

\begin{definition}\label{Def:TCQP} Let $A\in T$. If $A\in F\subset T$, then $\pi_1$ and $\pi_2$ act on $A$ via left multiplication in $F$. Otherwise, by Remark \ref{Rem:DegFactor} there are unique $A_1,A_2\in T$ such that $A=\Sigma(A_1,A_2)$. In this case, $\pi_1 A=A_1$ and $\pi_2 A=A_2$.
\end{definition}

The action of a general $w\in F$ on $T$ is determined inductively, applying one symbol of $w$ at a time from right to left. This action is not hard to understand. For $A\in T$, the symbols of $w$ give instructions to move successively along the directed graph of $A$, starting at the root of $A$ and moving to a right child for each $\pi_2$ and to a left child for each $\pi_1$ until the symbols of $w$ are exhausted or a leaf is reached. If the symbols of $w$ are exhausted, then $wA$ is the tree of descendents of the vertex that has been reached. Otherwise, write $w=w_1w_2$ where $w_2$ consists of the string of symbols that have brought us to this leaf. If the leaf has color $v\in F$, then $wA=w_1v$.

\begin{example}\label{Ex:FxT}
Let $A\in T$ be as pictured below. On the following line are $\pi_2A$, $\pi_1^3A$ and $\pi_2\pi_1A$.
\[\xymatrix{
&&&\circ\ar@{-}[lld]\ar@{-}[rrd]&&&\\
&\circ\ar@{-}[ld]\ar@{-}[rd]&&&&\circ\ar@{-}[ld]\ar@{-}[rd]\\
\pi_2&&\pi_2^2&&\pi_2^3&&\pi_1\pi_2^2
}\]
\medskip
\[\xymatrix{
\pi_2A=&&\circ\ar@{-}[ld]\ar@{-}[rd]&&\pi_1^3A=\pi_1\pi_2&\pi_2\pi_1A=\pi_2^2\\
&\pi_2^3&&\pi_1\pi_2^2&&
}\]
\end{example}

We now extend the action $F\times T\rightarrow T$ to a multiplication $T\times T\rightarrow T$.

\begin{definition}\label{Def:Tmonoid} For $A,B\in T$ define $AB\in T$, the \emph{product} of $A$ and $B$, to be the tree obtained by replacing each leaf of $A$ of color $w\in F$ by $wB$.\end{definition}

\begin{example}\label{Ex:TxT}
Using the computations in Example \ref{Ex:FxT} we have that:
\[\xymatrix{
&\circ\ar@{-}[ld]\ar@{-}[rd]&&&&&&\circ\ar@{-}[lld]\ar@{-}[rrd]&&&\\
\pi_2&&\circ\ar@{-}[ld]\ar@{-}[rd]&&&\circ\ar@{-}[ld]\ar@{-}[rd]&&&&\circ\ar@{-}[ld]\ar@{-}[rd]&=\\
&\pi_1^3&&\pi_2\pi_1&\pi_2&&\pi_2^2&&\pi_2^3&&\pi_1\pi_2^2
}\]
\[\xymatrix{
&&&\circ\ar@{-}[lld]\ar@{-}[rrd]&&&\\
&\circ\ar@{-}[ld]\ar@{-}[rd]&&&&\circ\ar@{-}[ld]\ar@{-}[rd]\\
\pi_2^3&&\pi_1\pi_2^2&&\pi_1\pi_2&&\pi_2^2
}\]
\end{example}

\begin{remark}\label{Rem:RD} We will see in Theorem \ref{UniversalQ} that this product is associative, so that $T$ is a monoid with identity $1$. We will accomplish this by constructing an injective magma homomorphism from $T$ to the monoid $S$. In the mean time, note that $T$ satisfies the right distributive property $\Sigma(A,B)C=\Sigma(AC,BC)$. Indeed, $AC$ and $BC$ are computed by replacing each leaf of $A$ and of $B$ by a colored tree that depends only on $C$ and on the color of the leaf. Since $\Sigma(A,B)$ has the same leaves with the same colors as those of $A$ and $B$, we may make this replacement before or after applying $\Sigma$ with the same effect.
\end{remark}

\begin{proposition}\label{Prop:Thom} Let $F$ be the free monoid and $T=Y_F$ be the free magma defined above. Let $(N,\Sigma^\prime,\pi_1^\prime,\pi_2^\prime)$ be a CQP monoid, let $\phi_0:F\rightarrow N$ be the unique monoid homomorphism such that $\phi_0(\pi_1)=\pi_1^\prime$ and $\phi_0(\pi_2)=\pi_2^\prime$ and let $\phi:T\rightarrow N$ be the unique magma homomorphism from $(T,\Sigma)$ to $(N,\Sigma^\prime)$ extending $\phi_0$. Then for every $A,B\in T$, $\phi(AB)=\phi(A)\phi(B)$, where $AB$ is as in Definition \ref{Def:Tmonoid}.
\end{proposition}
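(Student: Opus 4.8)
The plan is to prove the statement by induction on $\deg A$, with the crucial preliminary step being a lemma relating the left action of $F$ on $T$ to the homomorphism $\phi$: namely that $\phi(wB)=\phi_0(w)\phi(B)$ for all $w\in F$ and $B\in T$. This lemma handles precisely the degree-one case of $A$ (when $A=w\in F$ is a single colored leaf, we have $AB=wB$ by Definition \ref{Def:Tmonoid}), and it is where the CQP axioms of $N$ enter.

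To prove the action lemma, I would observe that since $F$ is free and acts on $T$ by applying one symbol at a time, and since $\phi_0$ is a monoid homomorphism, an induction on the length of $w$ reduces everything to the case $w=\pi_i$, where it suffices to show $\phi(\pi_i B)=\pi_i^\prime\phi(B)$. Here I split into the two cases of Definition \ref{Def:TCQP}. If $B=v\in F$, then $\pi_i B=\pi_i v$ in $F$ and $\phi(\pi_i B)=\phi_0(\pi_i v)=\pi_i^\prime\phi_0(v)=\pi_i^\prime\phi(B)$. If $B\notin F$, then $B=\Sigma(B_1,B_2)$ for unique $B_1,B_2$ (Remark \ref{Rem:DegFactor}) and $\pi_i B=B_i$; since $\phi$ is a magma homomorphism, $\phi(B)=\Sigma^\prime(\phi(B_1),\phi(B_2))$, and the defining identities $\pi_1^\prime\Sigma^\prime(a,b)=a$ and $\pi_2^\prime\Sigma^\prime(a,b)=b$ of the CQP monoid $N$ give $\pi_i^\prime\phi(B)=\phi(B_i)=\phi(\pi_i B)$.

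With the lemma in hand, the main induction is short. For $\deg A=1$ we have $A=w\in F$ and $\phi(AB)=\phi(wB)=\phi_0(w)\phi(B)=\phi(A)\phi(B)$. For $\deg A>1$, write $A=\Sigma(A_1,A_2)$ with $A_1,A_2$ of strictly smaller degree. The right distributive property of the product on $T$ (Remark \ref{Rem:RD}) gives $AB=\Sigma(A_1 B,A_2 B)$, since the leaves of $\Sigma(A_1,A_2)$ are exactly those of $A_1$ together with those of $A_2$ and the product acts leafwise. Then, using that $\phi$ is a magma homomorphism, the inductive hypothesis, and finally right distributivity in $N$ (axiom (3) of Definition \ref{Def:CQP}), I would compute
\[
\phi(AB)=\Sigma^\prime(\phi(A_1 B),\phi(A_2 B))=\Sigma^\prime(\phi(A_1)\phi(B),\phi(A_2)\phi(B))=\Sigma^\prime(\phi(A_1),\phi(A_2))\phi(B)=\phi(A)\phi(B).
\]

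The only real obstacle is the action lemma: the base case $w=\pi_i$ is where the two ingredients in the definition of the product — the left action of $F$ on leaves versus the ambient magma structure of $T$ — must be reconciled with the CQP axioms, and one must correctly match the case $B=\Sigma(B_1,B_2)$ against the projection identities. Everything after that is bookkeeping: two nested inductions (first on the length of $w$, then on $\deg A$) together with a single application each of the magma-homomorphism property of $\phi$ and of right distributivity in $N$.
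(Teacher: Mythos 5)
Your proof is correct and takes essentially the same approach as the paper's: the paper's first induction (on the length of $A\in F$, peeling off the rightmost symbol $\pi_i$ and invoking the CQP projection identities) is exactly your action lemma $\phi(wB)=\phi_0(w)\phi(B)$, and the subsequent induction on $\deg A$ using right distributivity in $T$ and in $N$ is identical in both arguments.
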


\begin{proof}
We start by assuming that $A\in F$. We will proceed by induction on the length of $A$, with the case of $A=1$ being clear. So we may assume that $A=w\pi_i$ for $i=1$ or $2$ and that $\phi(wC)=\phi(w)\phi(C)$ for all $C\in T$ by induction. If $B\in F$, then we are already done, so assume that $B=\Sigma(B_1,B_2)$. Then
$$\phi(AB)=\phi(wB_i)=\phi(w)\phi(B_i)=\phi(w)\pi_i^\prime\Sigma^\prime(\phi(B_1),\phi(B_2))=\phi(w\pi_i)\phi(B)=\phi(A)\phi(B)$$
as required. Now, we proceed by induction on $\deg A$ and let $A=\Sigma(A_1,A_2)$. Then $\phi(A_iB)=\phi(A_i)\phi(B)$ by induction. So, using right distributivity in $T$ and in $N$ we have
$$\phi(AB)=\phi\left(\Sigma(A_1B,A_2B)\right)=\Sigma^\prime\left(\phi(A_1B),\phi(A_2B)\right)=\Sigma^\prime\left(\phi(A_1),\phi(A_2)\right)\phi(B)=\phi(A)\phi(B)$$
as required.
\end{proof}

We now consider an important special case.

\begin{definition}\label{Def:Beta} Let $(S,\Sigma,\pi_1,\pi_2)$ be the CQP monoid described in Example \ref{Ex:FBS}. The \emph{branch homomorphism} is the magma homomorphism $\beta:T\rightarrow S$ induced by the map $\beta_0:F\rightarrow S$ taking $w$ to $\{0,w\}$, as in Proposition \ref{Prop:Thom}.
\end{definition}

The name \lq\lq branch homomorphism" is justified by the following.

\begin{lemma}\label{Lem:Branch}
For $A\in T$, the non-zero elements $w_1^*w_2$ of $\beta(A)$ are in one-to-one correspondence with the leaves of $A$, where $w_1$ read from right to left describes the path from the root to that leaf and $w_2$ is the color of that leaf.
\end{lemma}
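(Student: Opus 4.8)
The plan is to induct on $\deg A$, the number of leaves of $A$, using the recursive description of $\beta$ that comes from its being a magma homomorphism together with the formula for $\Sigma$ in $S$: for $A=\Sigma(A_1,A_2)$ we have $\beta(A)=\Sigma(\beta(A_1),\beta(A_2))=\pi_1^*\beta(A_1)\cup\pi_2^*\beta(A_2)$. The base case $\deg A=1$ is immediate: then $A=w\in F$ is a single leaf of color $w$, $\beta(A)=\{0,w\}$, and its unique non-zero element is $w=1^*w$, with $w_1=1$ recording the empty path from the (root $=$ only) leaf and $w_2=w$ its color.

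For the inductive step, I would analyze how multiplication by $\pi_i^*$ transforms the normal form. If $x=u_1^*u_2$ is a non-zero element of $\beta(A_i)$ in the canonical form $v^*w$ of Example \ref{Ex:FBS}, then using that $*$ is an antiautomorphism, $\pi_i^* x=\pi_i^*u_1^*u_2=(u_1\pi_i)^*u_2$, which is again in canonical form and in particular non-zero. By the inductive hypothesis $u_1$ read right to left is the root-to-leaf path inside $A_i$ and $u_2$ is that leaf's color; passing from $A_i$ to $A=\Sigma(A_1,A_2)$ prepends one step to each such path (into the left subtree when $i=1$, the right when $i=2$, since the root of $A$ has $r_{A_1}$ and $r_{A_2}$ as its children), and prepending this step is exactly the passage from $u_1$ to $u_1\pi_i$. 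Thus each non-zero element of $\pi_i^*\beta(A_i)$ carries precisely the path-and-color data of the corresponding leaf of $A$.

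It remains to confirm that these assignments assemble into a genuine one-to-one correspondence, and this is where the only real care is needed. First, left multiplication by $\pi_i^*$ is injective on $B$ because $\pi_i$ is a left inverse ($\pi_i\pi_i^*=1$), so distinct non-zero elements of $\beta(A_i)$ yield distinct non-zero elements, matching the inductive bijection with the leaves of $A_i$. Second, the union $\pi_1^*\beta(A_1)\cup\pi_2^*\beta(A_2)$ introduces no collisions: a non-zero element of the first set has canonical form $v^*w$ with $v$ ending in $\pi_1$, while one of the second has $v$ ending in $\pi_2$, and these are disjoint by uniqueness of the normal form. Hence the non-zero elements of $\beta(A)$ are the disjoint union of those coming from $A_1$ and from $A_2$, in bijection with the disjoint union of the leaves of $A_1$ and $A_2$, which is the leaf set of $A$, completing the induction. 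The main obstacle is purely bookkeeping: keeping the ``read right to left'' convention aligned with the left-to-right growth of the word under $v\mapsto v\pi_i$, and verifying disjointness so that one obtains a bijection rather than merely a surjection onto $\beta(A)$.
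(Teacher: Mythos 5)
Your proof is correct and follows essentially the same route as the paper's: induction on $\deg A$, using $\beta(\Sigma(A_1,A_2))=\pi_1^*\beta(A_1)\cup\pi_2^*\beta(A_2)$ and the antiautomorphism property of $*$ to match prepended path steps with left multiplication by $\pi_i^*$. The paper compresses the inductive step to ``the claim follows immediately by induction,'' whereas you explicitly verify the injectivity of left multiplication by $\pi_i^*$ and the disjointness of the two unioned sets via uniqueness of the normal form $v^*w$ --- details the paper leaves implicit.
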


\begin{proof} We induct on $\deg A$. If $\deg A=1$ so that $A=w\in F$, then $\beta(A)=\{0,w\}$ and we are done. Otherwise, say that $A=\Sigma(A_1,A_2)$. Then 
$$\beta(A)=\Sigma_S(\beta(A_1),\beta(A_2))=\pi_1^*\beta(A_1)\cup\pi_2^*\beta(A_2).$$
Keeping in mind that $*$ is an antiautomorphism of the branch monoid $B$, as in Example \ref{Ex:FBS}, the claim follows immediately by induction.
\end{proof}

\begin{theorem}\label{UniversalQ} $T$ is a CQP monoid. Let $(N,\Sigma^\prime,\pi_1^\prime,\pi_2^\prime)$ be another CQP monoid. Then there is a unique CQP monoid homomorphism $\phi:T\rightarrow N$.
\end{theorem}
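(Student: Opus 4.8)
The plan is to reduce almost everything to results already in hand, the one genuinely new ingredient being the associativity of the product of Definition \ref{Def:Tmonoid}. As flagged in Remark \ref{Rem:RD}, I would prove associativity not by a direct tree manipulation but by transporting it from the semiring monoid $S$, where it holds automatically, through the branch homomorphism $\beta$ of Definition \ref{Def:Beta}. So the first task is to show that $\beta\colon T\to S$ is an \emph{injective} map that carries the product of Definition \ref{Def:Tmonoid} to multiplication in $S$.

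Multiplicativity of $\beta$ is immediate: $\beta$ is by definition the magma homomorphism $\phi$ of Proposition \ref{Prop:Thom} in the special case $N=S$, and that proposition asserts exactly $\beta(AB)=\beta(A)\beta(B)$. For injectivity I would appeal to Lemma \ref{Lem:Branch}, which shows that the nonzero elements $w_1^*w_2$ of $\beta(A)$ encode, with no loss, the leaves of $A$ together with their root-to-leaf paths $w_1$ and colors $w_2$. Since a finite nonempty ordered binary tree is determined by its set of leaf addresses (a complete prefix-free set of left/right strings) and the colors are then attached leafwise, the set $\beta(A)$ reconstructs $A$ uniquely, so $\beta$ is injective. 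With these two facts, associativity follows formally: for $A,B,C\in T$ we have $\beta((AB)C)=\beta(A)\beta(B)\beta(C)=\beta(A(BC))$ by multiplicativity of $\beta$ and associativity in $S$, and injectivity then gives $(AB)C=A(BC)$. That $1\in F\subset T$ (the single leaf colored $1$) is a two-sided identity I would check directly from Definition \ref{Def:Tmonoid}: $1A$ replaces the lone leaf of $1$ by $1\cdot A=A$, and $A1$ replaces each leaf of color $w$ by $w\cdot 1=w$, returning $A$. Thus $T$ is a monoid.

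The three CQP axioms are then quick. Observing first that for $w\in F$ the monoid product $w\cdot X$ agrees with the left action $wX$ of Definition \ref{Def:TCQP} (again directly from Definition \ref{Def:Tmonoid}), axioms (1) and (2) read off from Definition \ref{Def:TCQP}, since $\Sigma(A,B)=(A,B)$ has degree $\ge 2$ and so is not in $F$, whence $\pi_1\Sigma(A,B)=A$ and $\pi_2\Sigma(A,B)=B$; axiom (3), right distributivity, is exactly Remark \ref{Rem:RD}. Hence $(T,\Sigma,\pi_1,\pi_2)$ is a CQP monoid. For universality, given a CQP monoid $(N,\Sigma^\prime,\pi_1^\prime,\pi_2^\prime)$ I would take $\phi\colon T\to N$ to be the homomorphism of Proposition \ref{Prop:Thom}: that proposition gives $\phi(AB)=\phi(A)\phi(B)$; $\phi$ is a magma homomorphism, so $\phi(\Sigma(A,B))=\Sigma^\prime(\phi(A),\phi(B))$; and $\phi(\pi_i)=\pi_i^\prime$ by construction, while $\phi$ extends the monoid homomorphism $\phi_0$ so $\phi(1)=1$. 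Thus $\phi$ is a CQP monoid homomorphism. Uniqueness is a freeness argument: any CQP homomorphism $\psi\colon T\to N$ restricts on $F$ to a monoid homomorphism sending $\pi_i\mapsto\pi_i^\prime$, which must equal $\phi_0$ since $F$ is free on $\pi_1,\pi_2$, and $\psi$ is a magma homomorphism $(T,\Sigma)\to(N,\Sigma^\prime)$ extending $\phi_0$, so $\psi=\phi$ by the uniqueness in Remark \ref{Rem:DegFactor}.

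I expect the main obstacle to be the injectivity of $\beta$: making rigorous that $\beta(A)\in S$, which is merely an unordered set of branch-monoid elements, nonetheless pins down the \emph{ordered} tree $A$. The point is that Lemma \ref{Lem:Branch} supplies a bijection between the leaves of $A$ and the nonzero members $w_1^*w_2$ of $\beta(A)$, and that the paths $w_1$ form a complete prefix code that recovers the tree shape, after which the $w_2$ recover the colors. Once injectivity is secured, the associativity and identity verifications, the CQP axioms, and the universal property are all essentially formal consequences of the results already established.
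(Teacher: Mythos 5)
Your proposal is correct, and its skeleton is the same as the paper's: prove that the branch homomorphism $\beta$ is injective and multiplicative, transport associativity from $S$, read the remaining CQP axioms off Definition \ref{Def:TCQP} and Remark \ref{Rem:RD}, and obtain existence and uniqueness of $\phi$ from Proposition \ref{Prop:Thom} together with freeness of $F$ and of the magma $T$. The one genuine divergence is in the injectivity of $\beta$, which is the crux of the proof. The paper argues by induction on degree: writing $A=\Sigma(A_1,A_2)$ and $B=\Sigma(B_1,B_2)$, it sorts the non-zero elements of $\beta(A)=\beta(B)$ according to whether their left-most factor is $\pi_1^*$ or $\pi_2^*$, deduces $\pi_i^*\beta(A_i)=\pi_i^*\beta(B_i)$, cancels $\pi_i^*$ (left multiplication by $\pi_i^*$ being injective in the branch monoid of Example \ref{Ex:FBS}), and concludes $A_i=B_i$ by induction. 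You instead reconstruct $A$ outright from the set $\beta(A)$: unique factorization $b=v^*w$ in $B$ recovers each leaf's address and color, and a complete prefix-free set of addresses determines the tree shape. Both are sound; the paper's induction is entirely self-contained, whereas your argument outsources the tree-shape step to the standard fact that a finite ordered binary tree in which every internal vertex has two children is determined by its set of leaf addresses. That fact is true but is not proved (or stated) in the paper, and its proof is essentially the same sorting-and-induction argument, so in a final write-up you should either prove it or simply run the paper's induction directly. A minor difference in your favor: you verify explicitly that $1$ is a two-sided identity and that any CQP homomorphism $\psi:T\rightarrow N$ must equal $\phi$ (restrict to $F$, invoke freeness of $F$, then apply the uniqueness in Remark \ref{Rem:DegFactor}), details the paper's proof asserts without elaboration.
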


That is, $T$ is the initial object in the category of $CQP$ monoids.

\begin{proof} The monoid structure on $T$ will be the product described in Definition \ref{Def:Tmonoid}, which has $1\in F$ as an identity. Indeed, we have already established in Definition \ref{Def:TCQP} and Remark \ref{Rem:RD} all of the identities required by Definition \ref{Def:CQP}. It only remains to see that the product of Definition \ref{Def:Tmonoid} is associative. We will show this by showing that the branch homomorphism $\beta:T\rightarrow S$ is injective, so that $T$ inherits associativity from $S$ by way of Proposition \ref{Prop:Thom}.

First note that the number of non-zero elements of $\beta(A)$ is $\deg A$ by Lemma \ref{Lem:Branch}. Now, consider $A,B\in T$ such that $\beta(A)=\beta(B)$. Then $\deg A=\deg B$. If this degree is $1$, then clearly $A=B$. Otherwise, $A=\Sigma(A_1,A_2)$ and $B=\Sigma(B_1,B_2)$. Then
$$\pi_1^*\beta(A_1)\cup\pi_2^*\beta(A_2)=\pi_1^*\beta(B_1)\cup\pi_2^*\beta(B_2).$$
Comparing the elements on each side whose left-most factor is $\pi_1^*$ or $\pi_2^*$, we see for $i=1,2$ that $\pi_i^*\beta(A_i)=\pi_i^*\beta(B_i)$. Since left multiplication by $\pi_i^*$ in the branch monoid of Example \ref{Ex:FBS} is injective, $\beta(A_i)=\beta(B_i)$ and we have by induction that $A_i=B_i$. Thus, $A=B$ and $\beta$ is injective.

It follows that the magma homomorphism $\phi:T\rightarrow N$ of Proposition \ref{Prop:Thom} is a CQP monoid homomorphism and is the unique CQP homomorphism from $T$ to $N$.
\end{proof}

Having now seen that $T$ is a CQP monoid, we also see that the branch homomorphism $\beta$ is the unique CQP monoid homomorphism $\beta:T\rightarrow S$ guaranteed by the theorem.

\begin{definition}\label{Def:U} $U$ is the CP monoid $T_{cp}=T/\!\!\sim_{cp}$, where $\sim_{cp}$ is as described in Remark \ref{Rem:Functor}.\end{definition}

It follows immediately from Theorem \ref{UniversalQ} and Remark \ref{Rem:Functor} that $U$ is an initial object in the category of CP monoids. To find out more about $U$, we need a more explicit description of $U$, which amounts to a more explicit description of the congruence $\sim_{cp}$ on $T$ generated by the relation $\Sigma(\pi_1,\pi_2)\sim_{cp} 1$. 

By the right distributive property, we have for every $w\in F$ that 
$$w\sim_{cp}\Sigma(\pi_1,\pi_2)w=\Sigma(\pi_1w,\pi_2w),$$
the tree with just two leaves which have colors $\pi_1w$ and $\pi_2w$ in order. Applying $\Sigma$ repeatedly, we see that whenever $A$ is a tree with a leaf of color $w$, we must be able to replace that leaf by $\Sigma(\pi_1 w, \pi_2 w)$ and remain in the congruence class. We will now examine the equivalence relation defined in that way. We will show that the equivalence relation is a congruence, so that it agrees with $\sim_{cp}$.

\begin{definition}\label{Def:Congruence} Let $\equiv$ be the equivalence relation on $T$ generated by the relation $A\sim B$ if $B$ is obtained from $A$ by replacing a leaf of $A$ of color $w$ by $\Sigma(\pi_1 w, \pi_2 w)$. That is, $A\equiv B$ if one can change $A$ to $B$ through a finite sequence of moves of the form:
\begin{enumerate} 
\item\emph{Expansion:} a leaf of $A$ colored by $w$ is replaced by a degree $2$ tree $\Sigma(\pi_1 w,\pi_2 w)$, or
\item\emph{Retraction:}  a pair of leaves of $A$ having a common parent and colors $\pi_1 w$ and $\pi_2 w$ in order are retracted to a single leaf of color $w$.
\end{enumerate}
An element $A\in T$ is \emph{reduced} with respect to $\equiv$ if $A$ has no pair of leaves having a common parent and colors $\pi_1 w$ and $\pi_2 w$ with $w \in F$ in order.
\end{definition}

The following lemma shows that the equivalence classes of $\equiv$ in $T$ are in bijection with the reduced elements of $T$. Furthermore, the proof shows that given $A\in T$ it is straightforward to compute the reduced element of $T$ that $A$ is equivalent to. If $A$ is not reduced already, then apply a retraction move to an eligible pair of leaves of $A$. Continue doing so until there are no more eligible pairs of leaves.

\begin{lemma} Every $A\in T$ is equivalent to a unique reduced element of $T$ under $\equiv$, denoted $r(A)$. Furthermore, $r(A)$ may be obtained from $A$ through a chain of retraction moves alone.
\end{lemma}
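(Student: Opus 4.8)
The plan is to treat the retraction move as a one-directional rewriting rule on $T$ and to apply the standard machinery of terminating, confluent rewriting systems. Write $A\to B$ to mean that $B$ is obtained from $A$ by a single retraction move, so that $\equiv$ is exactly the equivalence relation generated by $\to$, and call $A$ \emph{reduced} (a normal form) precisely when no retraction applies. The two facts to establish are \textbf{termination} — every chain of retractions is finite, which yields existence of a reduced element reachable from $A$ by retractions — and \textbf{confluence} — which yields uniqueness of that reduced element.

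First I would dispose of termination. A retraction deletes two sibling leaves and recolors their parent as a leaf, so it decreases $\deg A$, the number of leaves of $A$, by exactly one. Since $\deg A$ is a positive integer, there is no infinite descending chain of retractions; hence, starting from any $A$ and retracting eligible pairs until none remain, one reaches a reduced element in finitely many steps. This already proves the final sentence of the statement — that some reduced element is reachable from $A$ by retractions alone — and reduces the whole lemma to uniqueness.

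For uniqueness I would prove the (strong) diamond property for $\to$: if $A\to B_1$ and $A\to B_2$ with $B_1\ne B_2$, then there is a single $C$ with $B_1\to C$ and $B_2\to C$. The key point is that two distinct retraction redexes never interfere. A redex is determined by an internal node $p$ whose two children are leaves colored $\pi_1 w$ and $\pi_2 w$ in order; here $w$ is forced, since $F$ is free on $\pi_1,\pi_2$, by stripping the leading $\pi_1$ from the left-hand color. Distinct redexes correspond to distinct nodes $p_1\ne p_2$, and since the only proper descendants of each $p_i$ are its two leaf children, neither $p_i$ can be a proper descendant of the other; thus the two redexes sit at incomparable nodes. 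Retracting at $p_1$ alters the tree only at and below $p_1$, leaving the colors and adjacencies of the $p_2$-redex untouched, and symmetrically. Hence retracting the two redexes in either order produces the same tree $C$, establishing the diamond.

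Finally, termination together with the diamond property gives confluence of $\to$ (directly, or via Newman's lemma), so that the relation ``$A$ and $B$ have a common reduct'' is itself an equivalence relation and therefore coincides with $\equiv$. Consequently every $\equiv$-class contains exactly one reduced element, namely the common normal form of all its members, which we denote $r(A)$; and by termination it is reachable from $A$ through retractions alone. The only delicate step is the diamond property, and the work there is exactly the combinatorial observation that distinct redexes occupy incomparable nodes and hence commute — everything else is the formal skeleton of a normal-form argument.
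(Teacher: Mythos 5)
Your proof is correct and follows essentially the same route as the paper's: the paper's induction on degree is a hand-rolled version of Newman's lemma (with the commutation of distinct retractable pairs playing the role of your diamond property), and its induction on the number of moves joining $A$ to $B$ is exactly your Church--Rosser step. Your explicit verification that distinct redexes sit at incomparable nodes makes precise a commutation the paper merely asserts, but the substance of the argument is the same.
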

\begin{proof} We first show that if $A\in T$, then there is a unique reduced $C\in T$ that can be obtained from $A$ through just a chain of retraction moves. We show this by induction on the degree of $A$. If $\deg A=1$, then $A$ is already reduced and no retraction move is possible. Now, assume that $\deg A>1$. If $A$ is reduced, then we are done. If $A$ is not reduced, then consider the pairs of leaves that may be retracted. If there is only one such pair, then we must begin by retracting that pair. But then the degree of $A$ has decreased and we can apply induction. If there are more such pairs than one, then consider any two distinct pairs. Say that the retraction of $A$ using the first pair brings us to $A_1$ and the retraction of $A$ using the second pair brings us to $A_2$. Then by induction there are unique reduced $C_1$ and $C_2$ that $A_1$ and $A_2$ can be reduced to through a sequence of retractions. Now, the pair of retractable leaves giving $A_2$ still exists and is retractable in $A_1$ and the pair of retractable leaves giving $A_1$ still exists and is retractable in $A_2$. Retracting this pair of leaves in each case brings us to the same $A_3$, in which both pairs of leaves have been retracted, and there is a reduced $C_3$ that $A_3$ can be transformed to through a sequence of retraction moves. By the uniqueness properties of $C_1$ and $C_2$, we obtain that $C_1=C_3=C_2$, proving the claim.

For $A\in T$ let $r(A)$ be the unique reduced element of $T$ obtainable from $A$ through a chain of retraction moves. We now show that if $A\equiv B$ then $r(A)=r(B)$. We prove this by induction on the smallest number of expansion and retraction moves required to obtain $B$ from $A$, which we denote by $d(A,B)$.

If $d(A,B)=0$, then $A=B$ and there is nothing to show. If $d(A,B)=1$, then either $B$ can be obtained from $A$ by a retraction move, or $A$ can be obtained from $B$ by a retraction move. Either way, $r(A)=r(B)$. If $d=d(A,B)>1$, then given a sequence of $d$ moves taking $A$ to $B$, let $C$ be the result of the first $d-1$ such moves. Then $r(A)=r(C)=r(B)$ by induction.

Now we prove the statement of the lemma. If $A\in T$, then $r(A)$ is a reduced element of $T$ that is equivalent to $A$ by construction. If $B\in T$ is a reduced element of $T$ such that $A\equiv B$, then we have that $B=r(B)=r(A)$ as required.
\end{proof}

\begin{proposition}\label{Prop:Congruence} $\equiv$ and $\sim_{cp}$ are the same relation on $T$.\end{proposition}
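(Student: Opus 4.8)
The plan is to prove the two inclusions $\equiv\ \subseteq\ \sim_{cp}$ and $\sim_{cp}\ \subseteq\ \equiv$ separately, exploiting in the second the minimality of $\sim_{cp}$ recorded in Remark~\ref{Rem:Functor}. For the first inclusion, since $\sim_{cp}$ is an equivalence relation it suffices to check that each generating pair of $\equiv$ lies in $\sim_{cp}$, i.e. that a single expansion move $A\to B$ (Definition~\ref{Def:Congruence}) gives $A\sim_{cp}B$. I would argue this by induction on the tree structure of $A$: at a leaf colored $w$ one has $\Sigma(\pi_1w,\pi_2w)=\Sigma(\pi_1,\pi_2)w\sim_{cp}1\cdot w=w$, using right distributivity (Remark~\ref{Rem:RD}), the defining relation $\Sigma(\pi_1,\pi_2)\sim_{cp}1$, and the fact that $\sim_{cp}$ respects multiplication; when the expanded leaf lies inside $A=\Sigma(A_1,A_2)$, the inductive hypothesis together with the compatibility of $\sim_{cp}$ with $\Sigma$ pushes the relation up to $A$.

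For the reverse inclusion I would show that $\equiv$ is itself a congruence (Definition~\ref{Def:Cong}) containing the pair $(\Sigma(\pi_1,\pi_2),1)$; minimality of $\sim_{cp}$ then forces $\sim_{cp}\ \subseteq\ \equiv$. That $\Sigma(\pi_1,\pi_2)\equiv 1$ is immediate, as $\Sigma(\pi_1,\pi_2)$ is obtained from the leaf $1$ by one expansion. Compatibility with $\Sigma$ is equally transparent: a move performed inside $A$ (resp.\ $C$) is literally a move inside the left (resp.\ right) subtree of $\Sigma(A,C)$, so $A\equiv B$ and $C\equiv D$ give $\Sigma(A,C)\equiv\Sigma(B,D)$ by performing the moves in the two subtrees in turn. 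An easy induction on tree structure then upgrades this to a subtree-replacement principle: replacing any subtree of a tree by an $\equiv$-equivalent one preserves $\equiv$. I would also record the elementary but crucial fact that $E\equiv\Sigma(\pi_1E,\pi_2E)$ for every $E\in T$ --- this is an equality when $\deg E>1$ (by Definition~\ref{Def:TCQP}) and a single expansion when $E\in F$.

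The real work, and the step I expect to be the main obstacle, is compatibility of $\equiv$ with the product of Definition~\ref{Def:Tmonoid}, which I would split as $AC\equiv BC$ (for $A\equiv B$) and $BC\equiv BD$ (for $C\equiv D$) and then combine by transitivity. For the first, reducing to a single expansion $A\to B$ at a leaf colored $w$, the trees $AC$ and $BC$ coincide except that where $AC$ carries the subtree $E=wC$, the tree $BC$ carries $\Sigma\bigl(\pi_1(wC),\pi_2(wC)\bigr)=\Sigma(\pi_1E,\pi_2E)$, using associativity to rewrite $(\pi_iw)C=\pi_i(wC)$; the helper fact and subtree replacement then give $AC\equiv BC$. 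The second part is the delicate one, since it involves the left $F$-action: I would first prove that $C\equiv D$ implies $\pi_iC\equiv\pi_iD$, again reducing to a single expansion and checking the cases $C\in F$ (where the CQP identities $\pi_i\Sigma(\pi_1w,\pi_2w)=\pi_iw$ of Definition~\ref{Def:CQP} make the two sides literally equal) and $C=\Sigma(C_1,C_2)$ (where $\pi_iC$ and $\pi_iD$ are corresponding subtrees). Induction on the length of $w$ upgrades this to $wC\equiv wD$ for all $w\in F$, and since $BC$ and $BD$ are obtained from $B$ by substituting $wC$ and $wD$ at each leaf colored $w$, one last appeal to subtree replacement yields $BC\equiv BD$. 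Assembling these pieces shows $\equiv$ is a congruence, completing the proof.
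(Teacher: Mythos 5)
Your proposal is correct, and its overall architecture coincides with the paper's: both directions come down to showing that $\equiv$ is a congruence containing the pair $(\Sigma(\pi_1,\pi_2),1)$, after which minimality of $\sim_{cp}$ finishes the argument. Your treatment of right multiplication ($AC\equiv BC$) is essentially the paper's case (3): your helper fact $E\equiv\Sigma(\pi_1E,\pi_2E)$ --- an equality when $\deg E>1$ and a single expansion when $E\in F$ --- is exactly the paper's combination of an expansion move with Lemma~\ref{Lem:Left1}, and your subtree-replacement principle plays the role of the paper's induction on $\deg A$ via cases (1) and (2). Where you genuinely diverge is left multiplication ($CA\equiv CB$), the most delicate step in both accounts. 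The paper fixes $C=w\in F$ and traces the whole word $w$ through the tree: it factors $w=w_1w_2$ with $w_2$ the shortest right factor reaching a leaf, then splits into subcases according to whether the expanded leaf is the one selected and whether $w_1=1$ or $w_1=w_1'\pi_i$. You instead prove one-symbol compatibility first --- $C\equiv D$ implies $\pi_iC\equiv\pi_iD$, which is nearly trivial because $\pi_i\Sigma(\pi_1w,\pi_2w)=\pi_iw$ is a literal equality by Definition~\ref{Def:TCQP} --- and then induct on the length of $w$, using that the $F$-action is a monoid action, before one final appeal to subtree replacement. Your route buys modularity and avoids the paper's path-tracing case analysis entirely; the paper's route avoids isolating the subtree-replacement principle as a separate statement, folding it into degree inductions inside each case. (One small citation quibble: the identity $\pi_i\Sigma(\pi_1w,\pi_2w)=\pi_iw$ in $T$ is Definition~\ref{Def:TCQP} at work, not Definition~\ref{Def:CQP}, whose validity for $T$ is part of what Theorem~\ref{UniversalQ} establishes; since that theorem precedes this proposition, your appeals to associativity are legitimate in any case.)
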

\begin{proof} We have already seen in the discussion preceeding Definition \ref{Def:Congruence} that $\sim_{cp}$ contains $\equiv$. Since $\sim_{cp}$ is the smallest congruence for which $\Sigma(\pi_1,\pi_2)\sim_{cp}1$, and we have that $\Sigma(\pi_1,\pi_2)\equiv 1$, we only need to see that $\equiv$ is a congruence. We must show for $A, B, C\in T$ such that $A\equiv B$, we have that
\begin{enumerate}
\item $\Sigma(A,C)\equiv\Sigma(B,C)$,
\item $\Sigma(C,A)\equiv\Sigma(C,B)$,
\item $AC\equiv BC$, and
\item $CA\equiv CB$.
\end{enumerate}
In each of these congruences, it suffices to consider the situation in which $B$ is obtained from $A$ by a single expansion move.

Then, cases (1) and (2) are obvious. The expansion of a leaf of $A$ to two leaves of $B$ may be accomplished before or after joining with $C$ giving the same result.

For case (3), we begin by considering the case in which $A=w\in F$, so that $B=\Sigma(\pi_1 w,\pi_2 w)$. If $wC=v\in F$, then
$$AC=v\equiv\Sigma(\pi_1 v,\pi_2 v)=\Sigma(\pi_1 wC,\pi_2 wC)=\Sigma(\pi_1 w,\pi_2 w)C=BC$$
as required. But if $wC$ has degree greater than $1$, then $wC$ is in the image of $\Sigma$. Then by Lemma \ref{Lem:Left1} we have that $BC=\Sigma(\pi_1,\pi_2)wC=wC=AC$ as required.

Now, for case (3) in general, we proceed by induction on $\deg A$ with the case of $\deg A=1$ having been settled above already. If $\deg A>1$, then let $A=\Sigma(A_1,A_2)$. The leaf expanded to form $B$ is in either $A_1$ or in $A_2$. If it is in $A_1$, then $B=\Sigma(B_1,A_2)$ where $B_1$ is obtained from $A_1$ through this expansion move. Then using induction and case (1), we obtain
$$AC=\Sigma(A_1C,A_2C)\equiv\Sigma(B_1C,A_2C)=\Sigma(B_1,A_2)C=BC.$$
A similar argument using induction and case (2) may be used if the leaf expanded to form $B$ is in $A_2$.

It remains to prove case (4), that $CA\equiv CB$. We first consider the case in which $C=w\in F$. If $\deg wA>1$, then either the leaf of $A$ that is extended in $B$ is also a leaf of $wA$ or it is not. If it is, then we only need to extend the same leaf to obtain $wB$ so that $wA\equiv wB$ as required. If it is not, then $wA=wB$, and the case of $\deg wA>1$ is resolved.

If $\deg wA=1$, then factor $w$ as $w=w_1w_2$ such that $w_2$ is the shortest right factor of $w$ with the property that $w_2A$ has degree $1$. Thus, if the leaf of $A$ that $w_2$ selects has color $v$, then $wA=w_1v\in F$. If this leaf is not the leaf that was extended in $B$, then $wA=wB$ and we are done. If it is, then $wB=w_1\Sigma(\pi_1 v,\pi_2 v)$. We consider two possibilities:
\begin{enumerate}
\item If $w_1=1$, then
$$wB=\Sigma(\pi_1 v,\pi_2 v)\equiv v=wA.$$
\item If $w_1=w_1^\prime\pi_i$ for $i=1$ or $2$, then
$$wB=w_1^\prime\pi_i\Sigma(\pi_1 v,\pi_2 v)=w_1^\prime(\pi_i v)=w_1v=wA.$$
\end{enumerate}
The case of $C=w$ is now resolved.

Now, if $\deg C>1$, then we may write $C=\Sigma(C_1,C_2)$ for $C_1$ and $C_2$ of lower degree. Then applying cases (1) and (2) with induction on $\deg C$ we obtain
$$CA=\Sigma(C_1A,C_2A)\equiv\Sigma(C_1B,C_2B)=CB$$
proving case (4). 
\end{proof}

Since each congruence class for $\equiv$ in $T$ contains a unique reduced element, we conflate the elements of $U$ with their reduced representatives in $T$. We carry out the monoid and $\Sigma$ operations in $U$ by combining these reduced representatives in $T$ and then, when necessary, reducing the result. In this way, we consider $F$ to be a submonoid of $U$. 

\begin{example}\label{Ex:UxU}
Reconsider the example of a product of two elements of $T$ in Example \ref{Ex:TxT}. Considering them instead in $U$, we have that the righthand pair of colors in the product (in $T$) is $\pi_1\pi_2$ and $\pi_2^2$,  which is of the form $\pi_1w,\pi_2w$ (in order). So we contract the pair of leaves with these colors leaving a leaf of color $w=\pi_2$:
\[\xymatrix{
&\circ\ar@{-}[ld]\ar@{-}[rd]&&&&&&\circ\ar@{-}[lld]\ar@{-}[rrd]&&&\\
\pi_2&&\circ\ar@{-}[ld]\ar@{-}[rd]&&&\circ\ar@{-}[ld]\ar@{-}[rd]&&&&\circ\ar@{-}[ld]\ar@{-}[rd]&=\\
&\pi_1^3&&\pi_2\pi_1&\pi_2&&\pi_2^2&&\pi_2^3&&\pi_1\pi_2^2
}\]
\[\xymatrix{
&&\circ\ar@{-}[ld]\ar@{-}[rd]&\\
&\circ\ar@{-}[ld]\ar@{-}[rd]&&\pi_2\\
\pi_2^3&&\pi_1\pi_2^2&
}\]
Notice that the two leaves of colors $\pi_2^3,\pi_1\pi_2^2$ are not contracted, since the colors $\pi_2w,\pi_1w$ are in the wrong order.
\end{example}

If $A\in U$, then by $\deg A$ we mean the degree of this reduced representative in $T$. Since $\deg$ is additive for $\Sigma_T$, we have for $A,B\in U$ that $\deg\Sigma_U(A,B)\le\deg A+\deg B$. In fact, we have equality except for the case in which $A=\pi_1w$ and $B=\pi_2w$ when $\Sigma_U(A,B)=w$ has degree $1$.

\begin{theorem}\label{thm:Universal} $(U,\Sigma,\pi_1,\pi_2)$ is a CP monoid and is the initial object in the category of CP monoids. 
If $N$ is a non-zero CP monoid then the unique CP monoid homomorphism $\phi:U\rightarrow N$ is an injection.
\end{theorem}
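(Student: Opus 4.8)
The plan is to dispose of the first sentence by citation and then put all the work into injectivity. That $U=T_{cp}$ is a CP monoid which is initial among CP monoids has essentially already been recorded: by Remark~\ref{Rem:Functor} the quotient $T_{cp}$ is a CP monoid and the map $T\to T_{cp}$ is universal for CP targets, so combining this with the fact that $T$ is initial among CQP monoids (Theorem~\ref{UniversalQ}) gives at once that $U$ is initial among CP monoids. I would state this in one line and move on.

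For the injectivity claim, fix a non-zero CP monoid $(N,\Sigma^\prime,\pi_1^\prime,\pi_2^\prime)$ and the unique CP homomorphism $\phi:U\to N$. Precomposing with the quotient $T\to U$ recovers the unique CQP homomorphism $T\to N$ of Theorem~\ref{UniversalQ}, which I also denote $\phi$. Since every $\equiv$-class has a unique reduced representative (as established above) and elements of $U$ are identified with reduced trees, injectivity of $\phi$ on $U$ amounts exactly to the statement that for reduced $A,B\in T$, $\phi(A)=\phi(B)$ forces $A=B$. Two facts drive the argument. First, because $\phi$ is a CQP homomorphism, $\phi(\pi_i C)=\pi_i^\prime\phi(C)$ for the $T$-product $\pi_iC$ of Definition~\ref{Def:Tmonoid} and $\phi(\Sigma(C_1,C_2))=\Sigma^\prime(\phi(C_1),\phi(C_2))$; in particular $\pi_i^\prime\phi(C)=\phi(C_i)$ when $C=\Sigma(C_1,C_2)$, while $\pi_i^\prime\phi(w)=\phi(\pi_iw)$ when $C=w\in F$. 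Second, since $N$ is non-zero, Proposition~\ref{Prop:Free} shows $\pi_1^\prime,\pi_2^\prime$ generate a free submonoid, so the restriction $\phi|_F:F\to N$ is injective.

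I would then induct on $\deg A+\deg B$. If $\deg A=\deg B=1$, write $A=w$ and $B=v$; then $\phi|_F(w)=\phi|_F(v)$ forces $w=v$ by injectivity of $\phi|_F$. This is the base case and the only place the non-zero hypothesis is used. If $\deg A\ge 2$ and $\deg B\ge 2$, write $A=\Sigma(A_1,A_2)$ and $B=\Sigma(B_1,B_2)$, with all four pieces reduced since a subtree of a reduced tree is reduced; applying $\pi_i^\prime$ gives $\phi(A_i)=\pi_i^\prime\phi(A)=\pi_i^\prime\phi(B)=\phi(B_i)$, and $\deg A_i+\deg B_i<\deg A+\deg B$, so induction yields $A_i=B_i$ and hence $A=B$. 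The delicate case, which I expect to be the main obstacle, is when exactly one degree is $1$, say $A=w$ and $B=\Sigma(B_1,B_2)$ with $\deg B\ge 2$. Here one \emph{cannot} distinguish a leaf from an interior node by testing whether $\phi(A)$ lies in the image of $\Sigma^\prime$, since in a CP monoid $\Sigma^\prime$ is a bijection and every element is such an image. Instead I would apply $\pi_i^\prime$ to obtain $\phi(B_i)=\pi_i^\prime\phi(w)=\phi(\pi_iw)$; as $\deg B_i+1<1+\deg B$, induction gives $B_i=\pi_iw$, whence $B=\Sigma(\pi_1w,\pi_2w)$. But this is precisely a configuration forbidden in a reduced tree by Definition~\ref{Def:Congruence}, a contradiction, so this case never occurs and the induction closes. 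The crux is therefore the interplay between the combinatorial notion of \emph{reduced} and the observation that, after passing to a general CP target, it is reducedness rather than membership in the image of $\Sigma^\prime$ that separates leaves from interior nodes.
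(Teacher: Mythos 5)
Your proof is correct and takes essentially the same route as the paper: induction on $\deg A+\deg B$, with the base case settled by the freeness of $\pi_1^\prime,\pi_2^\prime$ from Proposition \ref{Prop:Free} and the inductive step driven by applying $\pi_i^\prime$ to both sides. The only difference is cosmetic: where you split off the mixed case $\deg A=1<\deg B$ and derive a contradiction with reducedness, the paper handles all cases uniformly by invoking the CP identity in $U$ to write $A=\Sigma(\pi_1A,\pi_2A)=\Sigma(B_1,B_2)=B$, so no case distinction (and no appeal to reduced representatives) is needed.
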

\begin{proof} That $U$ is a CP monoid and the initial object in its category follow from Definition \ref{Def:U}, Theorem \ref{UniversalQ} and Remark \ref{Rem:Functor}. 

It only remains to see that if $N\ne 0$, then $\phi$ is injective. We will show by induction on $\deg A+\deg B$ that if $\phi(A)=\phi(B)$ then $A=B$. If $\deg A+\deg B=2$ then $A,B\in F$ and $A=B$, since $A\ne B$ would contradict that $\pi_1^\prime$ and $\pi_2^\prime$ form a basis for a free submonoid of $N$, as guaranteed by Proposition \ref{Prop:Free}. If $\deg A+\deg B>2$, then one of $A$ and $B$, say $B$, has degree greater than $1$. Then $B=\Sigma(B_1,B_2)$ and $\deg B_i<\deg B$ for each $i$. So, for $i=1,2$ we have that
$$\phi(\pi_iA)=\pi_i^\prime\phi(A)=\pi_i^\prime\phi(B)=\pi_i^\prime\phi(\Sigma(B_1,B_2))=\pi_i^\prime\Sigma^\prime(\phi(B_1),\phi(B_2))=\phi(B_i).$$
But, $\deg\pi_iA\le\deg A$, so that $\deg\pi_iA+\deg B_i<\deg A+\deg B$ and we may apply induction to see that $\pi_iA=B_i$ for $i=1,2$. Thus, $A=\Sigma(B_1,B_2)=B$ as required.
\end{proof}

\section{Invertible Elements}\label{Section:Invertible}
In this section, we characterize the left and right invertible elements of $T$ and of $U$. Then, for an arbitrary CP monoid $M$, we establish a bijection between units of $M$ and CP structures on $M$. 

\begin{proposition} $A\in T$ has a right inverse if and only if $\deg A=1$. $B\in T$ has a left inverse if and only if some leaf of $B$ has color $1$. \end{proposition}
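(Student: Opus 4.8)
The plan is to reduce everything to two elementary facts about the product of Definition \ref{Def:Tmonoid}. The first is a leaf-counting observation: forming $AB$ replaces each leaf of $A$ by a \emph{nonempty} tree $wB$, so that $\deg(AB)=\sum_{w}\deg(wB)\ge\deg A$, where the sum runs over the leaves $w$ of $A$ and each term satisfies $\deg(wB)\ge 1$. The second is the explicit description of the left $F$-action given just before Example \ref{Ex:FxT}, which computes $wB$ by tracing the path dictated by $w$ through the tree $B$, yielding either the subtree at the reached vertex (if $w$ is exhausted) or a word $w_1v\in F$ (if a leaf is reached first).

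For the first statement, the ``only if'' direction is immediate from the degree inequality: if $AB=1$ then $\deg A\le\deg(AB)=1$, forcing $\deg A=1$. For the converse I would note that $\omega=\Sigma(1,1)$ satisfies $\pi_i\omega=1$ (as in the proof of Proposition \ref{Prop:Free}), so each $\pi_i$ is right-invertible; since $T$ is an associative monoid by Theorem \ref{UniversalQ} and a product of right-invertible elements is right-invertible, every word $w\in F$ is right-invertible. As the degree-$1$ elements of $T$ are exactly the elements of $F$, this gives the equivalence.

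For the second statement, I would first apply the first statement to reduce to words: any left inverse $A$ of $B$ satisfies $AB=1$, hence $\deg A=1$ and $A=w\in F$, so it remains to decide when $wB=1$. Here I would run the case analysis built into the $F$-action. If the symbols of $w$ are exhausted at a vertex $v$ of $B$, then $wB$ is the subtree rooted at $v$, and this equals $1$ precisely when $v$ is a leaf colored $1$. If instead a leaf is reached strictly before $w$ is exhausted, then $wB=w_1v$ with $w_1\ne 1$, and since $F$ is free, $w_1v\ne 1$; so this case never yields $1$. Thus $wB=1$ forces $B$ to have a leaf colored $1$. Conversely, if $B$ has such a leaf, taking $w$ to be the word tracing the path from the root to that leaf (as in Lemma \ref{Lem:Branch}) gives $wB=1$, since that path meets only internal vertices before arriving at the leaf, so $w$ is consumed exactly upon arrival.

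I expect the only delicate point to be the bookkeeping in this last case analysis: one must verify that following the path word to a leaf colored $1$ consumes $w$ exactly on arrival, so that the subtree branch of the action applies rather than the truncated-word branch, and one must invoke freeness of $F$ to exclude the truncated case $w_1v=1$ with $w_1\ne 1$. Everything else is routine counting of leaves together with the right-invertibility of the generators $\pi_1,\pi_2$.
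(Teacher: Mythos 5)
Your proof is correct and follows essentially the same route as the paper's: the degree inequality $\deg(AB)\ge\deg A$ for the right-inverse direction, right-invertibility of words in $F$ via $\Sigma(1,1)$ for the converse (the paper writes this as $w(\Sigma(1,1))^d=1$ by induction, which is the same inverse your monoid-theoretic argument produces), and for left inverses the reduction to $A=w\in F$ followed by the case analysis of the $F$-action on trees. The delicate bookkeeping you flag at the end is exactly the content of the paper's phrase that $w$ must ``describe a path exactly from the root of $B$ to a leaf of $B$'' of color $1$, so nothing is missing.
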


\begin{proof} First note that $\deg(AB)\ge\deg(A)$ since $AB$ is formed by replacing each leaf of $A$ by a tree with one or more leaves itself. Since $\deg(1)=1$, for $A$ to have a right inverse it must have degree $1$. Say that $A=w\in F$ has length $d$. Then it is easy to see by induction on $d$ that $w(\Sigma(1,1))^d=1$.

Now, if $B\in T$ has a left inverse, then that left inverse must be of the form $w\in F$. Then by the construction of $wB$, if $wB=1$ then $w$ must describe a path exactly from the root of $B$ to a leaf of $B$ and that leaf has color $1$. Conversely, if $B$ has a leaf of color $1$, choose $w$ to trace the path from the root to that leaf. Then $wB=1$, proving the second statement.
\end{proof}

To study left and right inverses in $U$, we must be able to recognize when $A\in T$ represents $1\in U$. A criterion using the branch homomorphism $\beta:T\rightarrow S$ (Definition \ref{Def:Beta}) is given in the following lemma.

\begin{lemma}\label{Lemma:Recognition} If $A\in T$ and $w\in F\subseteq T$, then $A\equiv w$ if and only if every non-zero element of $\beta(A)\in S$ is of the form $v^*vw$ for some $v\in F$.
\end{lemma}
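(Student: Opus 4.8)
The plan is to translate the displayed condition on $\beta(A)$ into a transparent combinatorial condition on the leaves of $A$, and then prove each implication by a short induction. By Lemma \ref{Lem:Branch}, the non-zero elements of $\beta(A)$ are exactly the words $w_1^* w_2$, where $w_1$ records the root-to-leaf path and $w_2$ is the color of the corresponding leaf. Since any product $v^*(vw)$ is already in the canonical ``stars-then-non-stars'' form $v^* w'$ of Example \ref{Ex:FBS}, and this form is unique, the equation $w_1^* w_2 = v^* v w$ forces $v = w_1$ and $w_2 = w_1 w$. Thus the condition ``every non-zero element of $\beta(A)$ has the form $v^* v w$'' is equivalent to the condition that every leaf of $A$ satisfies (color) $=$ (path word)$\cdot w$; call a leaf with this property $w$-\emph{good}, and call $A$ itself $w$-good if all its leaves are. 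The lemma then reads: $A \equiv w$ if and only if $A$ is $w$-good.

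For the forward implication I would show that $w$-goodness is invariant under the generating moves of $\equiv$. It suffices to treat a single expansion move and check both directions, since retraction is its inverse. Expanding a leaf at path $w_1$ of color $u$ replaces the element $w_1^* u$ of $\beta(A)$ by $(\pi_1 w_1)^*(\pi_1 u)$ and $(\pi_2 w_1)^*(\pi_2 u)$, that is, by two leaves at paths $\pi_i w_1$ with colors $\pi_i u$. A direct check shows the parent leaf is $w$-good if and only if both children are (all three conditions reduce to $u = w_1 w$), while the remaining leaves are untouched. Hence $w$-goodness is preserved in both directions by every move, so it is constant on $\equiv$-classes; since $w$ is visibly $w$-good, every $A \equiv w$ is $w$-good.

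For the converse I would induct on $\deg A$. If $\deg A = 1$ then $A = w_2 \in F$ is a single leaf at path $1$, and $w$-goodness forces $w_2 = w$, so $A = w \equiv w$. If $\deg A > 1$, write $A = \Sigma(A_1, A_2)$. A leaf of $A_1$ at path $p$ with color $c$ appears in $A$ at path $p\pi_1$, so $w$-goodness of $A$ gives $c = (p\pi_1) w = p(\pi_1 w)$; thus $A_1$ is $(\pi_1 w)$-good, and symmetrically $A_2$ is $(\pi_2 w)$-good. By the inductive hypothesis $A_1 \equiv \pi_1 w$ and $A_2 \equiv \pi_2 w$, so since $\equiv$ is a congruence (Proposition \ref{Prop:Congruence}) we obtain $A = \Sigma(A_1, A_2) \equiv \Sigma(\pi_1 w, \pi_2 w)$, which retracts in one move to $w$. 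Hence $A \equiv w$.

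The main obstacle is really the first paragraph: pinning down, via uniqueness of the canonical form $v^* w$, that the seemingly flexible condition ``of the form $v^* v w$ for some $v$'' is equivalent to the rigid, leaf-by-leaf equation (color) $=$ (path)$\cdot w$, with $v = w_1$ forced. Once that translation is in hand, the bookkeeping of how paths and colors transform under a single expansion, and under passing from $A$ to its children $A_1, A_2$, is routine, and both implications become clean inductions.
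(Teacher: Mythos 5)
Your proof is correct, and while the converse direction matches the paper's, your forward direction takes a genuinely different route. The paper proves both implications by induction on $\deg A$ over the same decomposition: for the forward direction it deduces $\pi_iA\equiv\pi_iw$ from $A\equiv w$ (which already invokes Proposition \ref{Prop:Congruence}, i.e.\ that $\equiv$ is a congruence), applies induction to $A_i=\pi_iA$, and reassembles via $\beta(A)=\pi_1^*\beta(A_1)\cup\pi_2^*\beta(A_2)$; for the converse it partitions the elements $v^*vw$ by the last letter of $v$ and inducts the same way, exactly as you do in your third paragraph. You instead begin by making explicit, via Lemma \ref{Lem:Branch} and the uniqueness of the factorization $b=v^*u$ ($v,u\in F$) from Example \ref{Ex:FBS}, that the hypothesis on $\beta(A)$ is the leaf-by-leaf equation stating that each leaf's color equals its path word times $w$, and you then prove the forward direction by showing this condition is invariant under a single expansion or retraction, inducting on the length of a chain of moves joining $A$ to $w$. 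This buys two things: your forward direction needs only the definition of $\equiv$ as generated by moves, with no appeal to Proposition \ref{Prop:Congruence}, and it isolates the one computation doing the work, namely the cancellation $\pi_iu=\pi_i(w_1w)\Leftrightarrow u=w_1w$ in the free monoid $F$. What the paper's version buys is uniformity (two parallel degree inductions over the decomposition $A=\Sigma(A_1,A_2)$), but it leaves the normal-form uniqueness in $B$ implicit; your reformulation surfaces that uniqueness as the crux --- your forced identification $v=w_1$ is exactly where it is required --- which makes the lemma's content more transparent.
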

\begin{proof} Say that $\deg A=1$ so that $A=x\in F$. Then $\beta(A)=\{0,x\}$, the only possible value of $v$ is $1$ and the statement is clear. Now we proceed by induction on $\deg A$.

Say that $\deg A>1$ and $A\equiv w\in F$. Let $A_i=\pi_i A$ for $i=1,2$ so that $\deg A_i<\deg A$ and $A_i\equiv \pi_iw$. By induction, each non-zero element of $\beta(A_i)$ is of the form $x^*x\pi_iw$ for some $x\in F$. But $\beta(A)=\pi_1^*\beta(A_1)\cup\pi_2^*\beta(A_2)$ so that each non-zero element of $\beta(A)$ is of the form $\pi_i^*x^*x\pi_iw=v^*vw$ where $v=x\pi_i$ as required.

If $\deg A>1$, then every non-zero element of $\beta(A)$ is of the form $x^*y$ with $x,y\in F$ and $x\ne 1$. If there is a $w\in F$ such that every non-zero element of $\beta(A)$ is of the form $v^*vw$, then we partition these elements into those such that the right-most symbol in $v$ is $\pi_1$ or $\pi_2$, since $v$ is never $1$. Then the left-most symbol of $v^*$ is correspondingly $\pi_1^*$ or $\pi_2^*$. Write each such $v$ as $v=x\pi_i$. Taking $A_i=\pi_iA$ as above, then $\beta(A)=\pi_1^*\beta(A_1)\cup\pi_2^*\beta(A_2)$, so that each element of $\beta(A_i)$ is of the form $x^*x\pi_iw$ for $x\in F$. By induction, $A_i\equiv\pi_iw$ so that
$$A=\Sigma(A_1,A_2)\equiv\Sigma(\pi_1w,\pi_2w)\equiv w.$$
\end{proof}

We will use the following basic notions to characterize left and right invertible elements of $U$.

\begin{definition}\label{Def:CoDepInd} Let $G$ be a free monoid and $w_1,\dots ,w_n$ a family of elements of $G$. This family is said to be
\begin{enumerate}
\item \emph{left cofinite} if all but finitely many $y\in G$ can be written $y=xw_i$ for some $x$ and $i$;
\item \emph{left dependent} if $w_i=xw_j$ for some $x\in G$ and $i\ne j$;
\item \emph{left independent} if not left dependent.
\end{enumerate}
A left cofinite family is \emph{minimally left cofinite} if it does not remain left cofinite after any member of the family is removed. A left independent family $w_1,\dots ,w_n$ is \emph{maximally left independent} if for every $w_{n+1}\in G$, $w_1,\dots ,w_{n+1}$ is left dependent.
\end{definition}

Note that if $w_i=w_j$ for some pair of distinct $i$ and $j$, then the family is left dependent, but repeats in the family can be ignored when considering left cofiniteness.
 
In our characterizations of left and right invertibility of $A\in U$, the monoid of Definition~\ref{Def:CoDepInd} is the free monoid $F$ and the family of elements of $F$ is the family of colors of leaves of a representative $\tilde{A}\in T$ of $A$. We first see that these properties do not depend on which representative we choose.

\begin{lemma}\label{Lemma:Choice} If $A, B\in T$ and $A\equiv B$, then the family of colors of leaves of $A$ and the family of colors of leaves of $B$ are
\begin{enumerate}
\item both left cofinite or both not left cofinite and are
\item both left dependent or both left independent.
\end{enumerate}
\end{lemma}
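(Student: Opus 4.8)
The plan is to exploit that $\equiv$ is generated by the expansion and retraction moves of Definition \ref{Def:Congruence}. Both asserted dichotomies are symmetric in $A$ and $B$, and retraction is the inverse of expansion, so it suffices to prove the lemma when $B$ is obtained from $A$ by a single expansion move; the general case then follows by propagating along a chain realizing $A\equiv B$. Such a move deletes one leaf, of some color $w\in F$, and inserts in its place two leaves of colors $\pi_1 w$ and $\pi_2 w$, leaving all other leaves alone. Writing the leaf-colors of $A$ as $w_1,\dots,w_n$ with $w_k=w$, the family for $B$ is obtained by deleting $w_k$ and adjoining $\pi_1 w,\pi_2 w$. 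The key device is, for $v\in F$, the \emph{suffix set} $S_v=\{xv\mid x\in F\}$ of words having $v$ as a right factor. Splitting a word $xv$ according to whether $x=1$ or $x$ ends in $\pi_1$ or $\pi_2$ yields the disjoint decomposition $S_v=S_{\pi_1 v}\sqcup S_{\pi_2 v}\sqcup\{v\}$; in particular $S_{\pi_1 v}$ and $S_{\pi_2 v}$ are disjoint subsets of $S_v$. I will reduce both parts of the lemma to this identity.

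For part (1), the family $w_1,\dots,w_n$ is left cofinite precisely when $F\setminus\bigcup_i S_{w_i}$ is finite. Passing from $A$ to $B$ replaces $S_{w_k}=S_w$ by $S_{\pi_1 w}\cup S_{\pi_2 w}=S_w\setminus\{w\}$, so the union $\bigcup_i S_{w_i}$ changes by at most the single element $w$. Hence the complements attached to $A$ and to $B$ differ by at most one element and are simultaneously finite or infinite, which is exactly (1).

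For part (2), I will use the characterization that a position-indexed family is left independent iff the sets $S_{w_i}$ are pairwise disjoint: a dependence $w_i=xw_j$ puts $w_i$ in $S_{w_i}\cap S_{w_j}$, while a common element of $S_{w_i}$ and $S_{w_j}$ displays $w_i$ and $w_j$ as two right factors of one word, forcing one to be a right factor of the other (and this correctly renders repeated colors dependent). The crux is the equivalence, valid for every $v\in F$,
$$S_w\cap S_v\ne\emptyset\iff\bigl(S_{\pi_1 w}\cup S_{\pi_2 w}\bigr)\cap S_v\ne\emptyset.$$
Here $\Leftarrow$ is immediate from $S_{\pi_\ell w}\subseteq S_w$, and for $\Rightarrow$ the decomposition reduces to the case $w\in S_v$, i.e.\ $w=xv$, whence $\pi_1 w=(\pi_1 x)v\in S_{\pi_1 w}\cap S_v$. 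Applying this with each $v=w_i$ for $i\ne k$, and using $S_{\pi_1 w}\cap S_{\pi_2 w}=\emptyset$, shows that the suffix sets of $A$ are pairwise disjoint iff those of $B$ are, giving (2).

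The only genuine obstacle is part (2): replacing $S_w$ by the two smaller sets must neither erase an existing overlap nor introduce a new one, and the delicate point is the word $w$ itself, which leaves $S_w$ but re-enters, one letter longer, inside $S_{\pi_1 w}$ and $S_{\pi_2 w}$. The displayed intersection equivalence is precisely what controls this, after which everything reduces to routine bookkeeping over pairs of leaves.
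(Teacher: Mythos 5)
Your proof is correct, and your overall skeleton (reduce to a single expansion move, then track how the move changes the leaf colors) matches the paper's. Your part (1) is essentially identical to the paper's argument: the paper states your decomposition $S_v=S_{\pi_1 v}\sqcup S_{\pi_2 v}\sqcup\{v\}$ (with $S_v=\{xv\mid x\in F\}$) in the form \lq\lq if $z=vw$ with $v\ne 1$, then $z$ is a left multiple of exactly one of $\pi_1 w$ and $\pi_2 w$,\rq\rq\ and likewise concludes that the two complements differ by at most the single word $w$. Where you genuinely diverge is part (2). The paper works directly with dependency equations $w_i=xw_j$ and runs a case analysis on whether $i$, $j$, or neither is the index of the expanded leaf (splitting further on $x=1$ versus $x=y\pi_k$), and then must argue the converse direction separately, using that $\pi_1 w_n$ and $\pi_2 w_n$ are not right divisors of one another so that every dependency in $B$ falls into one of the listed cases. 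You instead recast left independence of the position-indexed family as pairwise disjointness of the suffix sets $S_{w_i}$ --- correctly noting that this renders repeated colors dependent --- and prove one biconditional, $S_w\cap S_v\ne\emptyset\iff(S_{\pi_1 w}\cup S_{\pi_2 w})\cap S_v\ne\emptyset$, from which both implications of part (2) follow simultaneously (together with the automatic disjointness $S_{\pi_1 w}\cap S_{\pi_2 w}=\emptyset$). Both arguments rest on the same elementary free-monoid facts, but your packaging buys symmetry (no separate converse step) and a single device serving both halves of the lemma, at the modest cost of setting up and justifying the disjointness characterization; the paper's case analysis is more pedestrian but entirely self-contained.
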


\begin{proof} It suffices to consider the case in which $B$ is obtained from $A$ by a single expansion move.

For left cofiniteness, say that the leaf of $A$ that has been expanded has color $w\in F$. Note that if $z=vw$ where $v\ne 1$, then $z$ is a left multiple of exactly one of $\pi_1w$ and $\pi_2w$. Thus, if $y\in F$ and $y\ne w$, then $y$ is a left multiple of a color of a leaf of $A$ if and only if it is a left multiple of a color of a leaf of $B$. So, the set of elements of $F$ that are not left multiples of colors of $A$ differs from the set of elements of $F$ that are not left multiples of colors of $B$ by at most the single element $w$. Thus, one set is finite if and only if the other is, proving $(1)$.

For left dependency, up to reindexing we may say that the colors of the leaves of $A$ are $w_1,\dots ,w_n$ and the colors of the leaves of $B$ are $w_1,\dots ,w_{n-1},\pi_1w_n,\pi_2w_n$.

Say that $w_i=xw_j$ for some $i\ne j$. If $i,j<n$, then the colors of the leaves of $A$ and $B$ are both left dependent. If $i=n$ then this equation shows a left dependency for $A$ and $(\pi_1w_n)=\pi_1xw_j$ shows a left dependency for $B$. If $j=n$, then this equation shows a left dependency for $A$. If further $x=1$, then $\pi_1w_i=(\pi_1w_n)$ shows a left dependency for $B$. If $x\ne 1$, then $x=y\pi_k$ for $k=1$ or $2$ and $w_i=y(\pi_kw_n)$ shows a left dependency for $B$. This shows that if the family of colors of the leaves of $A$ is left dependent, then this is the case for $B$ as well.

Now, if the family of colors of the leaves of $B$ is left dependent, then since $\pi_1w_n$ and $\pi_2w_n$ are not right divisors of one another, the only ways that a dependency can occur are treated in the paragraph above. Thus, if the family of colors of the leaves of $B$ is left dependent, then this is the case for $A$ as well.
\end{proof}

\begin{theorem}\label{Thm:Left} Let $B\in U$ be represented by $\tilde{B}\in T$. Then $B$ has a left inverse in $U$ if and only if the family of colors of leaves of $\tilde{B}$ is left cofinite.
\end{theorem}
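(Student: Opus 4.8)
The plan is to first distill from Lemma~\ref{Lemma:Recognition} a transparent recognition criterion for the identity and then use it twice. For a leaf $\ell$ of a tree $X\in T$ write $a_\ell\in F$ for the word whose reverse (read right to left) traces the root-to-leaf path to $\ell$ — this is the $w_1$ of Lemma~\ref{Lem:Branch} — so that a complete binary tree of depth $m$ has exactly one leaf at each address $a$ with $|a|=m$. By Lemma~\ref{Lem:Branch} the nonzero elements of $\beta(X)$ are the $a_\ell^{\,*}c_\ell$, where $c_\ell$ is the color of $\ell$; feeding this into Lemma~\ref{Lemma:Recognition} with $w=1$ and invoking the uniqueness of the normal form $v^*w$ in the branch monoid, I obtain the criterion
$$X\equiv 1\quad\Longleftrightarrow\quad c_\ell=a_\ell\text{ for every leaf }\ell\text{ of }X,$$
i.e.\ $X$ represents $1$ exactly when every leaf color records that leaf's own address. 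This drives both implications.

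For the ``if'' direction, assume the leaf colors $w_1,\dots,w_n$ of $\tilde B$ are left cofinite, and let $L$ be the maximal length of the finitely many words of $F$ that are not left multiples of any $w_i$. I take $A$ to be the complete binary tree of depth $L+1$ and assign to its leaf at address $a$ (so $|a|=L+1>L$) the color $u'\,a_{k}$, where $a=u'w_{k}$ exhibits $a$ as a left multiple of some color $w_k$ and $a_k$ is the address of leaf $k$ of $B$. Tracing this color into $B$ first follows the path to leaf $k$ of $B$ (and cannot exit earlier, since no proper prefix of a leaf-path is itself a leaf-path) and then prepends the surplus $u'$, so this leaf contributes to $AB$ a single leaf of color $u'w_k=a$ at address $a$. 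Thus $AB$ is the depth-$(L+1)$ complete tree whose leaf at each address $a$ has color $a$; by the criterion $AB\equiv 1$, and $A$ is a left inverse of $B$.

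For the ``only if'' direction, suppose $AB\equiv 1$ with $A$ represented by $\tilde A$. Whenever a leaf of $\tilde A$ has a color that, traced into $B$, stops at an internal vertex, I apply an expansion move (Definition~\ref{Def:Congruence}); each such move pushes the stopping vertex one level deeper into the finite tree $B$, so after finitely many moves I may assume every leaf of $\tilde A$ traces through to a leaf of $B$, without changing the class of $A$ or (since $\equiv$ is a congruence, Proposition~\ref{Prop:Congruence}) of $AB$. Then each leaf $j$ of $\tilde A$, say with address $a_j$ and color $u_j=u_j'\,a_{k(j)}$ reaching leaf $k(j)$ of $B$, contributes to $AB$ a single leaf of color $u_j'w_{k(j)}$ at address $a_j$; the criterion forces $u_j'w_{k(j)}=a_j$, so each address $a_j$ has the color $w_{k(j)}$ as a right factor. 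Finally, the leaf-addresses $a_j$ are the reverses of a maximal prefix-free set of paths, so every word $y\in F$ with $|y|\ge\max_j|a_j|$ has some $a_j$ as a right factor, hence has $w_{k(j)}$ as a right factor; thus all but finitely many $y$ are left multiples of a color, which is exactly left cofiniteness.

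The conceptual content is the translation ``left cofinite colors $\leftrightarrow$ a maximal prefix-free family of leaf-addresses that exit $B$'', and the tool that makes it run is the recognition criterion, which Lemma~\ref{Lemma:Recognition} essentially hands us. I expect the main obstacle to be purely bookkeeping: keeping straight the reversal between an address $a_\ell$ and its root-to-leaf path (so that ``left multiple / right factor'' on the color side matches ``prefix'' on the path side), verifying in the construction that the designed colors trace through $B$ to the intended leaf without stopping short, and checking termination of the expansion reduction in the converse. None of these is deep, but each must be stated carefully to make the prefix-code argument airtight.
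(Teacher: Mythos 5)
Your proof is correct and takes essentially the same route as the paper's: both directions hinge on Lemma \ref{Lemma:Recognition} (which you repackage, via Lemma \ref{Lem:Branch} and the uniqueness of the $v^*w$ normal form, as the criterion that $X\equiv 1$ exactly when each leaf's color equals its address), your left inverse is the same complete binary tree with colors designed to trace through $\tilde{B}$, and your converse uses the same expansion-normalization of $\tilde{A}$ followed by reading off cofiniteness. The only cosmetic difference is that in the converse the paper expands $\tilde{A}$ all the way to a complete binary tree of uniform depth, whereas you stop once every color traces through a leaf of $\tilde{B}$ and instead invoke the maximal-prefix-free property of the leaf addresses of $\tilde{A}$.
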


\begin{proof} Say that there is an $A\in U$ such that $AB=1$.  Let $d$ be the length of the longest path from the root of $\tilde{B}$ to a leaf. Let $\tilde{A}$ be a representative for $A$ in $T$. After applying suitably many expansion moves to $\tilde{A}$, we may assume that ever leaf of $\tilde{A}$ has a color of length greater than or equal to $d$ and that the tree structure of $\tilde{A}$ is a complete binary tree of depth $e$. (The unique binary tree all of whose leaves are a distance $e$ from the root.) Then the tree structure of $\tilde{A}\tilde{B}$ is also a complete binary tree of depth $e$. Since $\tilde{A}\tilde{B}\equiv 1$, Lemma \ref{Lemma:Recognition} tells us that every non-zero element of $\beta(\tilde{A}\tilde{B})$ is of the form $v^*v$ for $v\in F$, and there is exactly one such element for each $v\in F$ of length $e$ since the tree structure of $\tilde{A}\tilde{B}$ is a complete binary tree of depth $e$. By the construction of $\tilde{A}\tilde{B}$, each such $v$ is a left multiple of a color of a leaf of $\tilde{B}$. So any element of $F$ that is not a left multiple of a color of a leaf of $\tilde{B}$ has length less than $e$. There are only finitely many such elements of $F$, so that the family of colors of leaves of $\tilde{B}$ is left cofinite as required.

Now, say that the family of colors of leaves of $\tilde{B}$ is left cofinite. Then there is a $d$ such that every $v\in F$ of length $d$ is a left multiple of a color of a leaf of $\tilde{B}$. We must find $\tilde{A}$ such that $\tilde{A}\tilde{B}\equiv 1$. Choose $\tilde{A}$ to have a tree structure that is a complete binary tree of degree $d$. Then the non-zero elements of $\beta(\tilde{A})$ are of the form $v^*w_v$ where $v$ varies over all elements of $F$ of length $d$ and the colors $w_v$ have yet to be chosen. For each such $v$, there is a leaf of $\tilde{B}$ with a color $x$ such that $v=yx$. Let $z$ be the element of $F$ that selects this leaf from $\tilde{B}$ so that $z\tilde{B}=x$. Let $w_v=yz$. With this choice of colors for the leaves of $\tilde{A}$ we have that the non-zero elements of $\beta(\tilde{A})$ are precisely the $v^*v$ where $v$ has length $d$. Then Lemma \ref{Lemma:Recognition} shows that $\tilde{A}\tilde{B}\equiv 1$ and if $A$ is the image of $\tilde{A}$ in $U$ then $AB=1$ as required.
\end{proof}

\begin{theorem}\label{Thm:Right} Let $A\in U$ be represented by $\tilde{A}\in T$. Then $A$ has a right inverse in $U$ if and only if the family of colors of the leaves of $\tilde{A}$ is left independent.
\end{theorem}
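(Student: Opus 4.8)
The plan is to reduce both implications to a single identity relating the colors of $\tilde A$ to the paths in $\tilde A$. For each leaf $i$ of $\tilde A$ let $w_i$ be its color and let $p_i\in F$ be the word tracing the root-to-leaf path, so that (as in Lemma \ref{Lem:Branch}) $p_i$ reads this path backwards and $p_i\tilde A=w_i$ in $T$; passing to $U$ we have $p_iA=w_i$. Now suppose $A$ has a right inverse, i.e.\ $AC=1$ in $U$ for some $C$. Multiplying on the left by $p_i\in F\subseteq U$ and using associativity gives
$$p_i = p_i(AC) = (p_iA)C = w_iC$$
in $U$, for every leaf $i$. This identity $w_iC=p_i$ is the hinge of the whole argument. (By Lemma \ref{Lemma:Choice} left independence is independent of the chosen representative, so it is harmless to work with the given $\tilde A$.)

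For the \emph{only if} direction I argue by contraposition. If the family of colors is left dependent, say $w_i=xw_j$ with $x\in F$ and $i\ne j$, then from $w_iC=p_i$ and $w_jC=p_j$ we obtain $p_i=x(w_jC)=xp_j$, an equation in the free monoid $F$ (which embeds in $U$). Thus $p_j$ is a suffix of $p_i$, which means the forward root-to-leaf path of leaf $j$ is a prefix of that of leaf $i$; since $j$ is a leaf this forces $i=j$, a contradiction. Hence no right inverse can exist, so right-invertibility of $A$ forces the colors of $\tilde A$ to be left independent.

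For the \emph{if} direction I will construct $C$ explicitly so that $w_iC=p_i$ for all $i$. Granting this, $\tilde A\tilde C$ is exactly $\tilde A$ with each leaf $i$ recolored from $w_i$ to $p_i$, and a tree all of whose leaf colors equal their own reversed paths is $\equiv 1$ by Lemma \ref{Lemma:Recognition} (every nonzero element of its branch image is then of the form $v^*v$), so $AC=1$. To build $\tilde C$, observe that left independence says no $w_i$ is a suffix of another, so the reversed words $\mathrm{rev}(w_i)$ are pairwise prefix-incomparable, i.e.\ form an antichain of nodes in the infinite binary tree. Let $\tilde C$ be a finite full binary tree having all these nodes as leaves: take the prefix closure of the $\mathrm{rev}(w_i)$ and adjoin a sibling leaf wherever a node of the closure has only one child. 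Color the leaf at position $\mathrm{rev}(w_i)$ by $p_i$ and color the finitely many added leaves arbitrarily. Following the path prescribed by $w_i$ from the root of $\tilde C$ then lands on the leaf at $\mathrm{rev}(w_i)$ with $w_i$ fully consumed, giving $w_i\tilde C=p_i$ as desired.

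The main obstacle is the bookkeeping in this last construction. I must verify, from Definition \ref{Def:TCQP}, that the action $w_i\tilde C$ reaches the designated leaf with the word \emph{exactly} exhausted, neither overshooting into a descendant nor stopping early at an adjoined leaf. The second danger cannot occur: an adjoined leaf lying on the path of some $w_i$ would be an ancestor of $\mathrm{rev}(w_i)$ and hence already belong to the prefix closure, contradicting that it was freshly adjoined. Once $w_i\tilde C=p_i$ is confirmed, everything else is formal, resting only on the identity $w_iC=p_i$ and the branch criterion of Lemma \ref{Lemma:Recognition}.
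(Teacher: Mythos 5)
Your proof is correct, and it departs from the paper's argument in a genuine way. The paper handles both directions by normalization: using Lemma \ref{Lemma:Choice} it applies expansion moves until every color of $\tilde A$ has the same length $d$, takes the (candidate or given) right inverse to be a complete binary tree of depth $d$, and reads everything off from the branch homomorphism via Lemma \ref{Lemma:Recognition}. Your ``only if'' direction avoids both the normalization and the branch homomorphism: the identity $w_iC=p_i$, which follows from $p_i=p_i(AC)=(p_iA)C$ and the embedding of $F$ into $U$, converts a dependency $w_i=xw_j$ into $p_i=xp_j$ inside the free monoid $F$, which is impossible because no leaf's root path can properly extend (or equal) another leaf's root path. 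This is more elementary than the paper's treatment of that direction and isolates a sharper structural fact: any right inverse must send each leaf color back to that leaf's path word. Your ``if'' direction shares the paper's core idea---build a tree $\tilde C$ so that each color $w_i$ selects a leaf of $\tilde C$ colored $p_i$, then apply Lemma \ref{Lemma:Recognition}---but where the paper's length normalization lets a complete tree of uniform depth work with no bookkeeping, you keep $\tilde A$ untouched and build the minimal adapted tree from the prefix closure of the reversed colors; the price is the overshoot/early-stop verification, which you carry out correctly (note also that left independence forces the $w_i$ to be pairwise distinct, so your assignment of the color $p_i$ to the leaf at position $\mathrm{rev}(w_i)$ is well defined). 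The paper's normalization buys brevity; your version buys an argument that never modifies the chosen representative and a cleaner algebraic explanation of why a dependency obstructs right invertibility.
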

\begin{proof}
Say that the family of colors of the leaves of $\tilde{A}$ is left independent. Thanks to Lemma \ref{Lemma:Choice} we harmlessly apply some expansion moves to $\tilde{A}$ so that we may assume that all of the colors of the leaves of $\tilde{A}$ have the same length $d$. Since the leaves of $\tilde{A}$ have left independent colors they are all distinct. Choose $\tilde{B}$ to have a tree structure that is a complete binary tree of depth $d$. Each color of $\tilde{A}$ selects a distinct leaf of $\tilde{B}$. Thus, the non-zero elements of $\beta(\tilde{A}\tilde{B})$ are of the form $v^*w_v$ where $v$ varies over elements of $F$ representing the paths from the roots of $\tilde{A}$ to its leaves, and the $w_v$ are colors yet to be chosen of distinct leaves of $\tilde{B}$. For these leaves of $\tilde{B}$, we choose $w_v=v$, and assign any remaining colors arbitrarily. Then $\tilde{A}\tilde{B}\equiv 1$ by Lemma~\ref{Lemma:Recognition} and the image of $\tilde{B}$ in $U$ is a right inverse for $A$.

Now, say that $A\in U$ has a right inverse $B\in U$ and choose a representative $\tilde{B}\in T$. After applying some expansion moves to $\tilde{A}$ and to $\tilde{B}$, we may assume that the colors of all of the leaves of $\tilde{A}$ have the same length $d$ and that $\tilde{B}$ is a complete binary tree of depth $d$. Thus, in the product $\tilde{A}\tilde{B}$ the colors of the leaves of $\tilde{A}$ are replaced by selected colors of the leaves of $\tilde{B}$. Since $AB=1\in U$, the non-zero elements of $\beta(\tilde{A}\tilde{B})$ are all of the form $v^*v$, where $v$ varies over elements of $F$ reperesenting the paths from the roots of $\tilde{A}$ to its leaves, and the color of the leaf of $\tilde{A}$ chosen by $v$ itself chooses a leaf of $\tilde{B}$ that has color $v$. It follows that the colors of the leaves of $\tilde{A}$ are all distinct. Since they are all of the same length, none is a left-multiple of another and so they form a left independent family as required.
\end{proof}

It follows from Theorems \ref{Thm:Left} and \ref{Thm:Right} that the units in $U$ are those trees whose colors are both left cofinite and left independent. We give the following alternative characterizations of the pair of these conditions together.

\begin{lemma}\label{Lem:Units} Let $G$ be a non-zero free monoid and let $w_1,\dots ,w_n\in G$. The following conditions for this family are equivalent:
\begin{enumerate}
\item The family is both left cofinite and left independent.
\item The family is minimally left cofinite.
\item The family is maximally left independent.
\end{enumerate}
\end{lemma}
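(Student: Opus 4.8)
The plan is to fix the notation $M_i=\{xw_i : x\in G\}$ for the set of left multiples of $w_i$, so that the family is left cofinite exactly when $G\setminus\bigcup_i M_i$ is finite. I would lean on one elementary fact about free monoids, call it the \emph{comparability fact}: if two elements $a,b\in G$ admit a common left multiple --- an element $z=xa=yb$ --- then one of $a,b$ is a left multiple of the other. Indeed, $a$ and $b$ are then the right factors of $z$ of lengths $|a|$ and $|b|$, and the shorter of these is a right factor of the longer. Two consequences get used repeatedly: first, if the family is left independent then the $M_i$ are pairwise disjoint; second, since $G$ is a non-zero free monoid it is infinite and left multiplication $x\mapsto xw_i$ is injective, so each $M_i$ is infinite. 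I would then prove the three conditions equivalent by establishing (1)$\Leftrightarrow$(2) and (1)$\Leftrightarrow$(3).

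For (1)$\Leftrightarrow$(2): assuming (1), independence gives pairwise disjoint $M_i$, so deleting any $w_i$ removes the entire infinite set $M_i$ from $\bigcup_i M_i$; the complement becomes infinite and the family is no longer left cofinite, which is (2). Conversely, assume (2). If the family were left dependent, say $w_i=xw_j$ with $i\neq j$, then $M_i\subseteq M_j$ and deleting $w_i$ would not change $\bigcup_i M_i$, so the smaller family would remain left cofinite, contradicting minimality; hence the family is left independent, and it is left cofinite by hypothesis, giving (1).

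For (1)$\Leftrightarrow$(3): assume (1) and let $w_{n+1}\in G$ be arbitrary. If $w_{n+1}$ were a left multiple of no $w_i$ and had no $w_i$ as a left multiple, then by the comparability fact $M_{n+1}$ would be disjoint from every $M_i$; but $M_{n+1}$ is infinite and contained in $G\setminus\bigcup_i M_i$, contradicting left cofiniteness. Hence $w_{n+1}$ is comparable to some $w_i$, so the enlarged family is left dependent, which is maximality (3). Conversely, assume (3). Maximality says precisely that every $u\in G$ is comparable to some $w_i$, i.e. either $u\in M_i$ or $u$ is a right factor of $w_i$. The right factors of all the $w_i$ form a finite set (at most $\sum_i(|w_i|+1)$ elements), so $G\setminus\bigcup_i M_i$ is finite; thus the family is left cofinite, and it is left independent by hypothesis, giving (1).

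The argument is complete once these four implications are in place; distinctness of the $w_i$ is subsumed, since left independence forbids repeats and minimality forbids them as well (a repeated element is redundant for cofiniteness). The only substantive point is the comparability fact paired with the observation that the elements comparable to a fixed $w_i$ split into the infinite cone $M_i$ and the finite set of right factors of $w_i$; once that asymmetry is isolated, each implication is a one-line consequence. I therefore expect no real obstacle, only the bookkeeping of keeping straight the direction of ``left multiple'' (so that $w_i=xw_j$ means $w_j$ is a right factor of $w_i$) and the use of the infinitude of $G$ to guarantee that every $M_i$, and every $M_{n+1}$, is infinite.
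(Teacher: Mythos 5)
Your proof is correct and follows essentially the same route as the paper's: both establish (1)$\Leftrightarrow$(2) and then (1)$\Leftrightarrow$(3), using the suffix-comparability property of free monoids, the infinitude of the cones of left multiples $M_i$, and the finiteness of the set of right divisors of the $w_i$. The only divergence is in (1)$\Rightarrow$(3), where the paper argues by contradiction through the already-proved equivalence (1)$\Leftrightarrow$(2) (an independent extension would be left cofinite but not minimally so), while you argue directly that an element incomparable to every $w_i$ would place its infinite cone inside the complement of $\bigcup_i M_i$; both arguments are sound.
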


\begin{proof} Say that $w_1,\dots ,w_n$ form a left cofinite and left independent family. Since $G$ is infinite, $n>0$ by cofiniteness. Choose $1\le i\le n$. Then by independence, $w_i$ is not a left multiple of any $w_j$ with $j\ne i$. Then for every $v\in G$, $vw_i$ is not a left multiple of $w_j$ with $j\ne i$ since $G$ is a free monoid. Since there are infinitely many such $vw_i$, leaving out $w_i$ would make the set not left cofinite. Therefore the family is minimally left cofinite.

Now, say that $w_1,\dots ,w_n$ form a left cofinite and left \emph{dependent} family. Then there are distinct $i$ and $j$ such that $w_i=vw_j$. Then $w_1,\dots ,w_{i-1},w_{i+1},\dots ,w_n$ is also left cofinite. Therefore, if $w_1,\dots ,w_n$ is minimally left cofinite, then it is also left independent. We have shown that conditions (1) and (2) are equivalent.

Now, say again that $w_1,\dots ,w_n$ form both a left cofinite and left independent family. If the family is not maximally left independent, then we may extend it to another left independent family $w_1,\dots ,w_{n+1}$. But, this family is also left cofinite since $w_1,\dots ,w_n$ is. In particular, it is not minimally left cofinite, but this contradicts the equivalence of cases (1) and (2). Thus, (1) implies (3).

Finally, say that $w_1,\dots ,w_n$ is maximally left independent. Then for every $x\in G$, $w_1,\dots ,w_n,x$ is left dependent. Thus, either $x$ is a left multiple of some $w_i$ or $x$ is a right divisor of some $w_i$. But, there are only finitely many right divisors of $w_1,\dots ,w_n$, so the family is left cofinite. Thus, (3) implies (1), completing the proof.
\end{proof}

\begin{corollary}\label{Cor:Units} Let $A\in U$ be represented by $\tilde{A}\in T$. Then $A$ is a unit in $U$ if and only if the family of colors of leaves of $\tilde{A}$ satisfies the equivalent conditions of Lemma \ref{Lem:Units}.\end{corollary}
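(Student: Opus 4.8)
The plan is to derive the corollary directly from Theorems \ref{Thm:Left} and \ref{Thm:Right} by way of the elementary monoid fact that an element is a unit precisely when it has both a left inverse and a right inverse. First I would record this fact: if $L,R\in U$ satisfy $LA=1$ and $AR=1$, then $L=L(AR)=(LA)R=R$, so $A$ has a genuine two-sided inverse and is therefore a unit; conversely, a unit is trivially both left and right invertible. Hence $A\in U$ is a unit if and only if $A$ has a left inverse \emph{and} $A$ has a right inverse.

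Next I would invoke the two one-sided characterizations already in hand. By Theorem \ref{Thm:Left}, $A$ has a left inverse in $U$ if and only if the family of colors of leaves of $\tilde{A}$ is left cofinite, and by Theorem \ref{Thm:Right}, $A$ has a right inverse if and only if that same family is left independent. Both theorems are phrased for an arbitrary representative, and Lemma \ref{Lemma:Choice} ensures that left cofiniteness and left independence do not depend on the choice of $\tilde{A}$, so there is no ambiguity in speaking of ``the family of colors of leaves of $\tilde{A}$.'' Combining the two theorems with the previous paragraph, $A$ is a unit in $U$ if and only if the family of colors of leaves of $\tilde{A}$ is simultaneously left cofinite and left independent.

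Finally, that simultaneous condition is exactly condition (1) of Lemma \ref{Lem:Units}, which in turn shows it equivalent to being minimally left cofinite (condition (2)) and to being maximally left independent (condition (3)); so $A$ is a unit if and only if the family satisfies any, hence all, of the three conditions, which is the assertion of the corollary. Because the statement is a formal consequence of two prior theorems and one prior lemma, I do not expect a substantive obstacle. The only point deserving care is the monoid-theoretic glue above: one must verify that a left inverse together with a right inverse collapses to a single two-sided inverse, so that ``unit'' is genuinely the conjunction of the two one-sided properties and not a strictly stronger requirement.
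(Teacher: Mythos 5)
Your proposal is correct and matches the paper's (implicit) argument: the paper likewise treats the corollary as an immediate consequence of Theorems \ref{Thm:Left} and \ref{Thm:Right} combined with Lemma \ref{Lem:Units}, with the standard monoid fact that a left and a right inverse must coincide supplying the glue. Your explicit verification of that fact, and your appeal to Lemma \ref{Lemma:Choice} for independence of the choice of representative, only make the same reasoning more careful.
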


We close this section by considering the relationship between left and right invertible elements of a CP monoid $M$ and alternative CQP or CP monoid structures on $M$.

\begin{theorem}\label{Thm:CPclass} Let $(M,\Sigma,\pi_1,\pi_2)$ be a CP monoid. Then there is a bijection
$$\{(f,g)\in M^2|fg=1\}\leftrightarrow\{(\Phi,\tau_1,\tau_2)|(M,\Phi,\tau_1,\tau_2)\text{ is a CQP monoid.}\}$$
that restricts to a bijection
$$\{(u,u^{-1})\in M^2|u\in M^*\}\leftrightarrow\{(\Phi,\tau_1,\tau_2)|(M,\Phi,\tau_1,\tau_2)\text{ is a CP monoid.}\}.$$
\end{theorem}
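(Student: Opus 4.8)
The plan is to exploit the fact that the \emph{given} $\Sigma$ is a bijection on $M$ in order to show that any competing CQP structure on the same monoid is rigidly determined by a single element, and then to match that data with the pair $(f,g)$.

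First I would establish the key reduction: if $(M,\Phi,\tau_1,\tau_2)$ is any CQP structure on the underlying monoid $M$, then $\Phi(a,b)=e\,\Sigma(a,b)$ where $e:=\Phi(\pi_1,\pi_2)$. Indeed, since $\pi_1\Sigma(a,b)=a$ and $\pi_2\Sigma(a,b)=b$ in the given CP structure, right distributivity of $\Phi$ gives
$$\Phi(a,b)=\Phi\bigl(\pi_1\Sigma(a,b),\pi_2\Sigma(a,b)\bigr)=\Phi(\pi_1,\pi_2)\,\Sigma(a,b)=e\,\Sigma(a,b).$$
Thus $\Phi$ is recovered from the single element $e$. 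Dually, the pair $(\tau_1,\tau_2)$ is recovered from the single element $f:=\Sigma(\tau_1,\tau_2)$, since $\pi_i f=\tau_i$. This reduces the whole problem to counting pairs $(f,e)$.

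Next I would write down the two maps and check they are mutually inverse. The forward map sends $(f,g)$ with $fg=1$ to $(\Phi,\tau_1,\tau_2)$, where $\Phi(a,b)=g\,\Sigma(a,b)$ and $\tau_i=\pi_i f$; the backward map sends $(\Phi,\tau_1,\tau_2)$ to $\bigl(\Sigma(\tau_1,\tau_2),\Phi(\pi_1,\pi_2)\bigr)$. That the forward map lands among CQP structures is routine: axiom (3) for $\Phi$ is inherited directly from right distributivity of $\Sigma$, while axioms (1) and (2) follow from $\tau_i\Phi(a,b)=(\pi_i f)\,g\,\Sigma(a,b)=\pi_i(fg)\Sigma(a,b)=\pi_i\Sigma(a,b)$ using $fg=1$, which equals $a$ resp.\ $b$. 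For the backward map I must verify $fg=1$ with $f=\Sigma(\tau_1,\tau_2)$ and $g=e$: by the reduction, axioms (1),(2) for $\Phi$ are equivalent to $\tau_i e=\pi_i$, whence $\pi_i(fg)=(\pi_i f)g=\tau_i e=\pi_i$ for $i=1,2$, and injectivity of $\Pi=\Sigma^{-1}$ forces $fg=1$. The two composites are then identities via the recovery formulas $\pi_i\Sigma(\tau_1,\tau_2)=\tau_i$, $\Sigma(\pi_1 f,\pi_2 f)=\Sigma(\pi_1,\pi_2)f=f$, and $g\,\Sigma(\pi_1,\pi_2)=g$.

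Finally I would pin down the restriction using partition of unity, Proposition~\ref{Prop:CPalt}(4). The structure $(\Phi,\tau_1,\tau_2)$ is CP exactly when $\Phi(\tau_1,\tau_2)=1$, and $\Phi(\tau_1,\tau_2)=g\,\Sigma(\tau_1,\tau_2)=gf$, so the CP condition is precisely $gf=1$. Combined with the standing relation $fg=1$, this says $f$ and $g$ are two-sided inverses, i.e.\ $f\in M^*$ and $g=f^{-1}$, so the first bijection restricts to $\{(u,u^{-1})\}\leftrightarrow\{\text{CP structures}\}$ as claimed. The main obstacle is the key reduction $\Phi(a,b)=e\,\Sigma(a,b)$; once that rigidity is in hand, the rest is the bookkeeping of two mutually inverse maps together with the single computation of $\Phi(\tau_1,\tau_2)$.
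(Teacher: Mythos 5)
Your proposal is correct and follows essentially the same route as the paper: the forward map $(f,g)\mapsto(g\Sigma,\pi_1f,\pi_2f)$ and backward map $(\Phi,\tau_1,\tau_2)\mapsto(\Sigma(\tau_1,\tau_2),\Phi(\pi_1,\pi_2))$ are exactly the paper's, and the verifications are the same computations. The only difference is organizational: you front-load the rigidity identity $\Phi(a,b)=\Phi(\pi_1,\pi_2)\Sigma(a,b)$ as a stated reduction, whereas the paper uses that same identity only implicitly in its final step.
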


\begin{proof} Take a pair $(f,g)$ such that $fg=1$ to the triple $(g\Sigma,\pi_1f,\pi_2f)$. It is easy to see that $(M,g\Sigma,\pi_1f,\pi_2f)$ is a CQP monoid. Indeed,
$(\pi_1f)(g\Sigma(a,b))=\pi_1\Sigma(a,b)=a$, and similarly for $\pi_2f$. Right distributivity for $g\Sigma$ follows from right distributivity for $\Sigma$. If furthermore $f=u$ and $g=u^{-1}$, then $u^{-1}\Sigma(\pi_1u,\pi_2u)=u^{-1}\Sigma(\pi_1,\pi_2)u=1$ so that $(u^{-1}\Sigma,\pi_1u,\pi_2u)$ is a CP monoid structure on $M$ by Proposition \ref{Prop:CPalt}.

Take a triple $(\Phi,\tau_1,\tau_2)$ to the pair $(\Sigma(\tau_1,\tau_2),\Phi(\pi_1,\pi_2))$. Then
$$\Sigma(\tau_1,\tau_2)\Phi(\pi_1,\pi_2)=\Sigma(\tau_1\Phi(\pi_1,\pi_2),\tau_2\Phi(\pi_1,\pi_2))=\Sigma(\pi_1,\pi_2)=1$$
by Proposition \ref{Prop:CPalt}. If $(M,\Phi,\tau_1,\tau_2)$ is a CP monoid, then we also have that $\Phi(\pi_1,\pi_2)\Sigma(\tau_1,\tau_2)=1$, so that $\Sigma(\tau_1,\tau_2)\in M^*$.

If we start with a pair $(f,g)$ and then apply the composition of these two maps, we obtain the pair $(\Sigma(\pi_1f,\pi_2f),g\Sigma(\pi_1,\pi_2))=(f,g)$. If we start with a triple $(\Phi,\tau_1,\tau_2)$ and then apply the composition of these two maps in the opposite order, we obtain the triple $(\Phi(\pi_1,\pi_2)\Sigma,\tau_1,\tau_2)$. But, $\Phi(\pi_1,\pi_2)\Sigma(a,b)=\Phi(a,b)$ as required. So, we have a bijection as claimed.
\end{proof}

In the next section we introduce $d$-CP monoids. Theorem \ref{Thm:CPclass} has a straight-forward generalization to the case of a $d$-CP monoid for any $d\ge 0$, with only simple changes to the proof.

\section{Finite Monoids}
In this section, we show that if $M$ is a non-zero CP monoid, then \emph{every} finite monoid has an injective homomorphism into $M$. 
To facilitate the construction, we consider a generalization of a CP monoid.

\begin{definition}\label{Def:dCP} For $d\in\NN$, a \emph{categorical $d$-fold product ($d$-CP) monoid} is a monoid $M$ with distinguished elements $\tau_1,\dots ,\tau_d$ and a bijective function $\Phi:M^d\rightarrow M$ such that
$$\tau_i\Phi(m_1,\dots ,m_d)=m_i$$
for $1\le i\le d$.
\end{definition}

We could similarly define $d$-CQP monoids, but do not have occasion to use them. A $2$-CP monoid is simply a CP monoid as defined earlier. As with the original definition, this definition is motivated by the question of when a category with one object has $d$-fold products.

Note that a $0$-CP monoid is simply a monoid with one element. A $1$-CP monoid is a monoid with a specified unit $\tau\in M$; $\Phi$ must be given by $\Phi(m)=\tau^{-1}m$. Thus, any monoid may be endowed with the structure of a $1$-CP monoid. While this is easy to check directly, this may also be viewed as the $d=1$ case of the appropriate generalization of Theorem \ref{Thm:CPclass}, where $(M,\text{id},1)$ is the canonical $1$-CP structure on $M$.

As we explained in Section \ref{Section:CQPMonoids}, the arguments in this paper have straight-forward generalizations to $d$-CP (or $d$-CQP) monoids when $d\ge 2$.
One could go further. In the definition of a $d$-CP monoid, one might let $d$ be an inifinite cardinal. However, many of the arguments in this paper require $d$ to be finite.

We will use the following properties of a $d$-CP monoid $M$ whose proofs are easy generalizations of the proofs for the $d=2$ case:
\begin{enumerate}
\item $\Phi(\tau_1,\dots ,\tau_d)=1$. See Proposition \ref{Prop:CPalt}.
\item $\Phi(m_1,\dots ,m_d)n=\Phi(m_1n,\dots ,m_dn)$ for all $m_1,\dots ,m_d,n\in M$. See Proposition \ref{Prop:CPalt}.
\item If $M\ne 0$, then $\tau_1,\dots ,\tau_d$ are distinct. See Proposition \ref{Prop:Free}.
\end{enumerate}

$d$-CP products have the following property that we did not investigate for $2$-fold products earlier.

\begin{proposition}\label{Prop:Anti} Let $M$ be a non-zero $d$-CP monoid with distinguished elements $\tau_1,\dots ,\tau_d$ and bijection $\Phi:M^d\rightarrow M$. Let $E_d$ be the monoid of functions from the set $\{1,\dots ,d\}$ to itself. Then the function $\phi:E_d\rightarrow M$ given by $\phi(f)=\Phi(\tau_{f(1)},\dots ,\tau_{f(d)})$ is an injective monoid antihomomorphism.
\end{proposition}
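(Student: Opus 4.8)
The plan is to verify directly that $\phi$ is a unital antihomomorphism and then deduce injectivity from the bijectivity of $\Phi$ together with the distinctness of the $\tau_i$. Throughout I would lean on the three listed properties of a non-zero $d$-CP monoid: the projection identity $\tau_i\Phi(m_1,\dots,m_d)=m_i$, partition of unity $\Phi(\tau_1,\dots,\tau_d)=1$, the right distributive law $\Phi(m_1,\dots,m_d)n=\Phi(m_1n,\dots,m_dn)$, and the fact that $\tau_1,\dots,\tau_d$ are distinct. The first easy check is that $\phi$ sends the identity of $E_d$ to $1$: since the identity function $\id\in E_d$ satisfies $\id(i)=i$, we get $\phi(\id)=\Phi(\tau_1,\dots,\tau_d)=1$ by partition of unity.

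The main computation is the antihomomorphism property, and here I would be careful about the order of composition in $E_d$ (writing $fg$ for $f\circ g$, so $(fg)(i)=f(g(i))$). Given $f,g\in E_d$, I would compute $\phi(g)\phi(f)$ by applying right distributivity with $n=\phi(f)=\Phi(\tau_{f(1)},\dots,\tau_{f(d)})$ to obtain
$$\phi(g)\phi(f)=\Phi(\tau_{g(1)},\dots,\tau_{g(d)})\,\phi(f)=\Phi\bigl(\tau_{g(1)}\phi(f),\dots,\tau_{g(d)}\phi(f)\bigr).$$
The crux is then the projection identity applied coordinatewise: for each $i$, $\tau_{g(i)}\phi(f)=\tau_{g(i)}\Phi(\tau_{f(1)},\dots,\tau_{f(d)})=\tau_{f(g(i))}=\tau_{(fg)(i)}$. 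Substituting this back gives $\phi(g)\phi(f)=\Phi(\tau_{(fg)(1)},\dots,\tau_{(fg)(d)})=\phi(fg)$, which is exactly the antihomomorphism identity $\phi(fg)=\phi(g)\phi(f)$.

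Injectivity is then immediate: if $\phi(f)=\phi(g)$, then $\Phi(\tau_{f(1)},\dots,\tau_{f(d)})=\Phi(\tau_{g(1)},\dots,\tau_{g(d)})$, and since $\Phi$ is a bijection (hence injective) we get $\tau_{f(i)}=\tau_{g(i)}$ for every $i$. Because $M\ne 0$, the distinguished elements $\tau_1,\dots,\tau_d$ are distinct, so $f(i)=g(i)$ for all $i$ and thus $f=g$. I do not expect a serious obstacle in this argument; the only step requiring genuine care is the bookkeeping in the antihomomorphism computation, where the composition convention $\tau_{g(i)}\phi(f)=\tau_{f(g(i))}$ is precisely what forces the product order to reverse, turning $\phi$ into an \emph{anti}homomorphism rather than a homomorphism.
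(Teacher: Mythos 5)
Your proof is correct and follows essentially the same route as the paper: the antihomomorphism identity is obtained exactly as in the paper's proof, by right distributivity followed by the projection identity $\tau_{g(i)}\Phi(\tau_{f(1)},\dots,\tau_{f(d)})=\tau_{f(g(i))}$, together with partition of unity for $\phi(\id)=1$. The only (immaterial) difference is in injectivity, where you invoke injectivity of the bijection $\Phi$ plus distinctness of the $\tau_i$, while the paper applies $\tau_i$ on the left via the projection identity to separate $\phi(f)$ from $\phi(g)$.
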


\begin{proof} Note first that $\tau_i\phi(f)=\tau_{f(i)}$. So, if $f\ne g$ then choose an $i$ such that $f(i)\ne g(i)$. Then $\tau_i\phi(f)\ne\tau_i\phi(g)$, so that $\phi(f)\ne\phi(g)$ and $\phi$ is injective.

To see that $\phi$ is an antihomomorphism, consider
\[\begin{array}{clllc}\phi(f)\phi(g) &= &\Phi(\tau_{f(1)},\dots ,\tau_{f(d)})\Phi(\tau_{g(1)},\dots ,\tau_{g(d)}) &= &\\
&& \Phi\left(\tau_{f(1)}\Phi(\tau_{g(1)},\dots ,\tau_{g(d)}),\dots ,\tau_{f(d)}\Phi(\tau_{g(1)},\dots ,\tau_{g(d)})\right) &= &\\
&& \Phi(\tau_{g(f(1))},\dots ,\tau_{g(f(d))}) &= &\phi(gf).\end{array}\]
 Also,
$\phi(\text{id})=\Phi(\tau_1,\dots ,\tau_d)=1$.
This shows that $\phi$ is an antihomomorphism as claimed. 
\end{proof}

\begin{corollary}\label{Cor:FiniteUnits} Let $M$ be a non-zero $d$-CP monoid with distinguished elements $\tau_1,\dots ,\tau_d\in M$ and bijection $\Phi:M^d\rightarrow M$ and let $S_d$ be the symmetric group on ${1,\dots ,d}$. Then the function $\psi:S_d\rightarrow M$ given by $\psi(\sigma)=\Phi(\tau_{\sigma^{-1}(1)},\dots ,\tau_{\sigma^{-1}(d)})$ is an injective monoid homomorphism.
\end{corollary}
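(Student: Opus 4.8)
The plan is to derive this immediately from Proposition \ref{Prop:Anti}. The key observation is that $S_d$ sits inside $E_d$ as the submonoid of bijections, and that for a permutation $\sigma$ the definition of $\psi$ can be rewritten as $\psi(\sigma)=\phi(\sigma^{-1})$, where $\phi$ is the injective antihomomorphism of Proposition \ref{Prop:Anti}; indeed $\Phi(\tau_{\sigma^{-1}(1)},\dots,\tau_{\sigma^{-1}(d)})=\phi(\sigma^{-1})$ directly from the definition of $\phi$. Thus $\psi$ is the composite of the inversion map $\iota\colon S_d\to S_d$, $\iota(\sigma)=\sigma^{-1}$, with the restriction of $\phi$ to the submonoid $S_d$.

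First I would record that inversion is an antiautomorphism of the group $S_d$, since $(\sigma\rho)^{-1}=\rho^{-1}\sigma^{-1}$. Next I would combine this with the identity $\phi(f)\phi(g)=\phi(gf)$ established in Proposition \ref{Prop:Anti}. The composite of two order-reversing maps is order-preserving, so $\psi$ will be a homomorphism; concretely,
\[\psi(\sigma\rho)=\phi\bigl((\sigma\rho)^{-1}\bigr)=\phi\bigl(\rho^{-1}\sigma^{-1}\bigr)=\phi(\sigma^{-1})\phi(\rho^{-1})=\psi(\sigma)\psi(\rho).\]
I would also check that $\psi(\id)=\phi(\id)=\Phi(\tau_1,\dots,\tau_d)=1$, using property (1) of a $d$-CP monoid, so that $\psi$ respects identities.

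For injectivity, since $\phi$ is injective by Proposition \ref{Prop:Anti} and $\iota$ is a bijection of $S_d$, the composite $\psi=\phi\circ\iota$ is injective as well. There is no real obstacle here: the entire content is already carried by Proposition \ref{Prop:Anti}, and the only point requiring care is the bookkeeping with $\sigma^{-1}$, which is precisely what converts the antihomomorphism $\phi$ into a genuine homomorphism on the group $S_d$.
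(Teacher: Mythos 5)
Your proposal is correct and is essentially identical to the paper's own proof, which likewise writes $\psi=\phi|_{S_d}\circ\iota$ with $\iota(\sigma)=\sigma^{-1}$ and appeals to Proposition \ref{Prop:Anti}; you have simply made the one-line composition argument explicit.
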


\begin{proof} $\psi=\phi |_{S_d}\circ\iota$ where $\phi$ is as in the proposition and $\iota:S_d\rightarrow S_d$ is the antiautomorphism given by $\iota(\sigma)=\sigma^{-1}$.
\end{proof}

To further leverage the antihomomorphism in the proposition above, we show how to build higher order $d$-CP structures on a monoid from lower order ones in the following lemma, which is a specialization of a standard construction for combining products in category theory.

\begin{lemma}\label{Lemma:Combine} Let $M$ be a monoid that is a $d$-CP monoid in the following ways:
\begin{enumerate}
\item $(M,\tau_{i1},\dots ,\tau_{id_i},\Phi_i)$ is a $d_i$-CP monoid for $1\le i\le s$ and
\item $(M,\rho_1,\dots ,\rho_s,\Psi)$ is an $s$-CP monoid.
\end{enumerate}
Then for $D=\sum_{i=1}^sd_i$, $M$ is a $D$-CP monoid with
$$\tau_{11}\rho_1,\dots ,\tau_{1d_1}\rho_1,\tau_{21}\rho_2,\dots ,\tau_{sd_s}\rho_s\in M$$
and bijection $\Delta:M^D\rightarrow M$ given by
$$\Delta(m_{11},\dots ,m_{1d_1},m_{21},\dots ,m_{sd_s}) =$$
$$\Psi(\Phi_1(m_{11},\dots ,m_{1d_1}),\dots ,\Phi_s(m_{s1},\dots ,m_{sd_s})).$$
\end{lemma}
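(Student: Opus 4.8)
The plan is to verify directly that $(M,\tau_{11}\rho_1,\dots,\tau_{sd_s}\rho_s,\Delta)$ satisfies the two defining conditions of a $D$-CP monoid (Definition~\ref{Def:dCP}): that each distinguished element recovers the corresponding coordinate of $\Delta$, and that $\Delta$ is a bijection. Since this is a special case of the standard categorical fact that a product of products is a product, I expect the calculations to be routine; the only care needed is to keep the double-index bookkeeping straight.

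First I would check the projection identities. For the distinguished element $\tau_{ij}\rho_i$ applied to $\Delta(m_{11},\dots,m_{sd_s})$, I would compute
$$\tau_{ij}\rho_i\,\Psi(\Phi_1(m_{11},\dots,m_{1d_1}),\dots,\Phi_s(m_{s1},\dots,m_{sd_s})).$$
Here $\rho_i\Psi(x_1,\dots,x_s)=x_i$ by the $s$-CP projection identity, so this collapses to $\tau_{ij}\Phi_i(m_{i1},\dots,m_{id_i})$, and then the $d_i$-CP projection identity for $\Phi_i$ gives exactly $m_{ij}$. This establishes condition~(1) of Definition~\ref{Def:dCP} for every index pair $(i,j)$.

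Second I would show $\Delta$ is a bijection. The cleanest route is to exhibit a two-sided inverse built from the inverses of $\Psi$ and the $\Phi_i$. Since each $\Phi_i$ is a bijection $M^{d_i}\to M$ and $\Psi$ is a bijection $M^s\to M$, the map
$$m\mapsto\big(\Phi_1^{-1}(\rho_1 m),\dots,\Phi_s^{-1}(\rho_s m)\big)$$
(unpacking each $\Phi_i^{-1}(\rho_i m)\in M^{d_i}$ into its $d_i$ coordinates) is a well-defined function $M\to M^D$, and I would check it is a two-sided inverse to $\Delta$. One composite is immediate from the projection identities just proved; the other follows because $\Psi(\rho_1 m,\dots,\rho_s m)=m$ (the analogue of $\Sigma(\pi_1,\pi_2)m=m$ from the CP case, applied coordinatewise) together with $\Phi_i(\Phi_i^{-1}(\rho_i m))=\rho_i m$. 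Alternatively, I would note $\Delta$ is a composite of bijections: it factors as $M^D\cong M^{d_1}\times\cdots\times M^{d_s}\xrightarrow{\Phi_1\times\cdots\times\Phi_s}M^s\xrightarrow{\Psi}M$.

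The main obstacle, such as it is, is purely notational rather than conceptual: one must be careful that the flattening of the $D$ arguments into the grouped blocks $M^{d_1}\times\cdots\times M^{d_s}$ is consistent between the statement of $\Delta$ and the inverse, so that the coordinate $m_{ij}$ sits in block $i$, position $j$, matching the distinguished element $\tau_{ij}\rho_i$. Once the indexing convention is fixed, both conditions of Definition~\ref{Def:dCP} follow by the short computations above, and no nontrivial structural argument is required since the result is a formal consequence of the universal property of iterated products.
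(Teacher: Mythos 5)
Your proof is correct, and it matches the paper's approach: the paper's entire proof of this lemma is the single word ``Straight-forward,'' and your direct verification (projection identities via associativity and the $s$-CP and $d_i$-CP identities, plus bijectivity of $\Delta$ as the composite $M^D\cong M^{d_1}\times\cdots\times M^{d_s}\rightarrow M^s\rightarrow M$ of bijections) is exactly the routine check the authors intend the reader to supply.
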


\begin{proof} Straight-forward.\end{proof}

\begin{corollary} Let $M$ be a $d$-CP monoid. Then $M$ is also a $d^2$-CP monoid. If $d=2$, then $M$ is also a $e$-CP monoid for every $e\ge 1$.\end{corollary}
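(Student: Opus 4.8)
The plan is to derive both statements as immediate applications of Lemma \ref{Lemma:Combine}, which assembles a $D$-CP structure out of an $s$-CP structure whose $s$ slots are filled by $d_i$-CP structures, with $D=\sum_{i=1}^s d_i$. So the whole proof will amount to choosing the values of $s$ and the $d_i$ correctly; there is essentially no independent calculation to perform.

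For the first claim I would take $s=d$ and reuse the given $d$-CP structure $(M,\tau_1,\dots,\tau_d,\Phi)$ both as the outer $s$-CP structure and as each of the $s$ inner structures, i.e. set $d_i=d$ and $\Phi_i=\Phi$ for every $i$. Then $D=\sum_{i=1}^d d=d^2$, and Lemma \ref{Lemma:Combine} directly yields a $d^2$-CP structure on $M$. The only thing to flag is that the hypotheses of Lemma \ref{Lemma:Combine} do not require the inner structures (or the outer one) to be distinct, so reusing a single structure in all slots is legitimate, which is exactly what this case needs.

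For the second claim I would argue by induction on $e$, increasing $e$ by one at each step. The base case is $e=1$: as the text observes, every monoid carries a $1$-CP structure (take $\tau=1$ and $\Phi=\id$), so $M$ is a $1$-CP monoid. For the inductive step, assume $M$ is an $e$-CP monoid. I would apply Lemma \ref{Lemma:Combine} with $s=2$, using the given $2$-CP structure $(M,\pi_1,\pi_2,\Sigma)$ as the outer $s$-CP structure, the inductive $e$-CP structure in the first slot ($d_1=e$), and the canonical $1$-CP structure in the second slot ($d_2=1$). Then $D=e+1$, so $M$ is an $(e+1)$-CP monoid, which completes the induction and shows $M$ is an $e$-CP monoid for every $e\ge 1$.

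I do not anticipate a genuine obstacle, since both parts reduce to bookkeeping inside Lemma \ref{Lemma:Combine}. The two points meriting a sentence of care are that the lemma permits repeating the same CP structure in several slots (needed for the $d^2$ claim) and that the $1$-CP structure invoked in the induction exists on an arbitrary monoid (already noted in the text); together with the given $2$-CP structure these let the induction step by one rather than by two, yielding \emph{all} $e\ge 1$.
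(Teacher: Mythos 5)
Your proposal is correct and matches the paper's proof essentially verbatim: the paper also takes $s=d$ with $d_1=\dots=d_d=d$ for the $d^2$ claim, and for $d=2$ inducts using Lemma \ref{Lemma:Combine} with $s=2$ and a $1$-CP slot (the paper writes $d_1=e-1$, $d_2=1$ where you write $d_1=e$, $d_2=1$, which is the same induction phrased one step apart). Nothing further is needed.
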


\begin{proof} For the first statement, let $s=d$ and $d_1,\dots ,d_d=d$.

For the second statement, any monoid is a $1$-CP monoid with structure $(M,\text{id},1)$. For $e>1$, use $s=2$, $d_1=e-1$ and $d_2=1$ in the lemma, so that the statement follows by induction. \end{proof}

\begin{theorem}\label{Thm:Finite} Let $M$ be a non-zero $d$-CP monoid where $d\ge 2$ and let $N$ be any finite monoid. Then there is an injective monoid homomorphism from $N$ into $M$.\end{theorem}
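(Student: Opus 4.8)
The plan is to reduce the theorem to Cayley's theorem for monoids, fed through the antihomomorphism of Proposition \ref{Prop:Anti}. Write $n=|N|$; since $N$ contains an identity, $n\ge 1$. The whole argument splits into three moves: inflate the product structure on $M$, invoke Proposition \ref{Prop:Anti}, and realize $N$ by a regular representation inside a transformation monoid.

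First I would enlarge the product structure carried by $M$. By the Corollary following Lemma \ref{Lemma:Combine}, a $d$-CP monoid is automatically a $d^2$-CP monoid; iterating this observation, $M$ carries a $d^{2^k}$-CP structure for every $k\ge 0$. Because $d\ge 2$, the integers $d^{2^k}$ are unbounded, so I may fix $k$ with $D:=d^{2^k}\ge n$ and henceforth regard $M$ as a $D$-CP monoid. This is precisely where the hypothesis $d\ge 2$ enters: for $d\le 1$ no such enlargement exists, matching the fact that the conclusion fails there.

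Next, Proposition \ref{Prop:Anti} applied to this $D$-CP structure supplies an injective monoid antihomomorphism $\phi:E_D\to M$, where $E_D$ is the full transformation monoid on $\{1,\dots,D\}$ and $\phi(\id)=1$. It remains to place $N$ \emph{antihomomorphically} inside $E_D$, so that the two order-reversals cancel. To this end I identify the underlying set of $N$ with $\{1,\dots,n\}\subseteq\{1,\dots,D\}$ and send $a\in N$ to the transformation $R_a\in E_D$ that fixes every point exceeding $n$ and acts on the first $n$ points by right multiplication, $x\mapsto xa$. A direct check gives $R_{ab}=R_b\circ R_a$ and $R_1=\id$, so $a\mapsto R_a$ is an injective antihomomorphism $N\to E_D$ (injectivity because $R_a(1)=a$). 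Composing, since both $\phi$ and $a\mapsto R_a$ reverse order, the map $a\mapsto\phi(R_a)$ satisfies $\phi(R_{ab})=\phi(R_b\circ R_a)=\phi(R_a)\phi(R_b)$ and is thus a genuine monoid homomorphism $N\to M$; it carries $1$ to $\phi(\id)=1$ and is injective as a composite of injections, which finishes the proof.

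I do not expect a deep obstacle here, since Proposition \ref{Prop:Anti} and the combination lemma were arranged for exactly this purpose; the only real care is bookkeeping. Concretely, one must use the \emph{right} regular representation rather than the left, so that composing two order-reversing maps returns a homomorphism, and one must know that the product structure on $M$ can be inflated to degree at least $n$, which is the single point where $d\ge 2$ is essential.
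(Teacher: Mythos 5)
Your proof is correct and follows essentially the same route as the paper's: inflate the $d$-CP structure to a $D$-CP structure with $D\ge\#N$ via the corollary to Lemma \ref{Lemma:Combine}, embed $N$ by the right regular representation (an antihomomorphism) into the transformation monoid on $\{1,\dots,D\}$, and compose with the antihomomorphism of Proposition \ref{Prop:Anti} so the two reversals cancel. The only cosmetic difference is that the paper factors the embedding $N\hookrightarrow E_D$ through an intermediate monoid $E$ of functions on $N$ itself, which you fold into a single step.
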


\begin{proof} Let $E$ be the monoid of (set) functions from $N$ to itself. Then we have an injective antihomomorphism $N\rightarrow E$ taking $n$ to $\rho_n$ where $\rho_n(x)=xn$. Repeatedly applying the previous corollary, we have that $M$ is a $D$-CP monoid for some $D\ge\# N$. (In the $d=2$ case, we could take $D=\#N$.) Choose a set $X$ such that $N\subseteq X$ and $X$ has $D$ elements. Let $E^\prime$ be the monoid of set functions from $X$ to $X$. Then we have an injective monoid homomorphism $E\rightarrow E^\prime$, taking $f$ to the function from $X$ to $X$ that agrees with $f$ on $N\subseteq X$, and is the identity on the rest of $X$. Identifying the elements of $X$ with the integers $1,\dots ,D$ in some arbitrary manner, we obtain from Proposition \ref{Prop:Anti} an injective monoid antihomomorphism $E^\prime\rightarrow M$. The composition
$$N\hookrightarrow E\hookrightarrow E^\prime\hookrightarrow M$$
of two injective monoid antihomomorphisms and one injective monoid homomorphism is the desired injective monoid homomorphism.
\end{proof}

\begin{corollary} Let $X$ be an object of a category $\calC$ such that $X$ is $X^d$ for some $d\ge 2$ and $X$ has some endomorphism other than the identity. If $N$ is any finite monoid, then $N$ has a faithful action on $X$.
\end{corollary}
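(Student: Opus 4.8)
The plan is to recognize this corollary as the categorical translation of Theorem \ref{Thm:Finite}, exactly as the introduction anticipates in the case $d=2$. A faithful action of $N$ on $X$ is precisely an injective monoid homomorphism $N\rightarrow\End(X)$, so it suffices to produce one. I would obtain it by first endowing $M:=\End(X)$ with the structure of a non-zero $d$-CP monoid and then invoking Theorem \ref{Thm:Finite}.

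First I would extract the $d$-CP structure on $M$ from the hypothesis that $X$ is $X^d$. Fix an isomorphism $\alpha:X\rightarrow X^d$ and let $p_1,\dots,p_d:X^d\rightarrow X$ be the product projections. Define $\tau_i=p_i\alpha\in\End(X)$ for $1\le i\le d$, and let $\Pi:\End(X)\rightarrow\End(X)^d$ be $\Pi(g)=(\tau_1 g,\dots,\tau_d g)$. Since $\alpha$ is an isomorphism, $g\mapsto\alpha g$ is a bijection $\End(X)\rightarrow\Hom(X,X^d)$, and the universal property of the product $X^d$ (applied with source object $X$) says that $h\mapsto(p_1 h,\dots,p_d h)$ is a bijection $\Hom(X,X^d)\rightarrow\End(X)^d$. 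Hence $\Pi$ is a bijection; write $\Phi$ for its inverse. By construction $\tau_i\Phi(m_1,\dots,m_d)=m_i$, so $(M,\tau_1,\dots,\tau_d,\Phi)$ is a $d$-CP monoid in the sense of Definition \ref{Def:dCP}.

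Next I would verify the two remaining hypotheses of Theorem \ref{Thm:Finite}: that $d\ge 2$, which is assumed, and that $M\ne 0$. The monoid $M=\End(X)$ always contains the identity $\id$, and by hypothesis $X$ carries some endomorphism distinct from $\id$, so $M$ has at least two elements and is non-zero. Theorem \ref{Thm:Finite} then furnishes an injective monoid homomorphism $N\rightarrow M=\End(X)$, which is exactly a faithful action of $N$ on $X$.

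I expect essentially no obstacle, since the entire content of the corollary lies in Theorem \ref{Thm:Finite}; the only work here is the bookkeeping of the first step, which translates the categorical universal property of $X\cong X^d$ into the algebraic $d$-CP axioms. The one point to handle with a little care is the role of the isomorphism $\alpha$: the projections must be pre-composed with $\alpha$ so that each $\tau_i$ genuinely lies in $\End(X)$ and the bijection $\Pi$ is expressed purely in terms of the monoid $\End(X)$. Everything else is an immediate appeal to the established theorem.
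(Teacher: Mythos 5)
Your proof is correct and takes essentially the same route as the paper's: endow $M=\End(X)$ with a non-zero $d$-CP monoid structure coming from $X\cong X^d$ and the nontrivial endomorphism, then invoke Theorem \ref{Thm:Finite} to get an injective homomorphism $N\rightarrow\End(X)$, i.e.\ a faithful action. The only difference is one of detail: the paper treats the translation of the categorical product into the $d$-CP axioms as immediate (it is the discussion in the introduction), whereas you spell out the bookkeeping with the isomorphism $\alpha$ and the projections, which is a harmless elaboration.
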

\begin{proof} That $X=X^d$ implies that $M=\End(X)$ is a $d$-CP monoid. That $X$ has a nontrivial endomorphism implies that $M\ne 0$. Thus, the previous theorem gives an injective monoid homomorphism $N\rightarrow M$, which is to say a faithful action of $N$ on $X$.
\end{proof}

\section{A Conjecture and Two Questions}

We pose some questions concerning generation of CQP monoids. While we have the universal CP monoid $U$ in mind, we make the following conjecture in greater generality.

\begin{conjecture}\label{Conj:NotFG} If $M$ is a non-zero CQP monoid, then $M$ is not finitely generated as a monoid.\end{conjecture}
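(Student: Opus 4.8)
The plan is to derive a contradiction from the assumption that a non-zero CQP monoid $M$ is generated by finitely many elements. The central tool will be the right-invertibility structure exposed in Proposition \ref{Prop:Free}: we know $\pi_1$ and $\pi_2$ are simultaneously right-invertible via $\omega=\Sigma(1,1)$, with $\pi_1\omega=\pi_2\omega=1$, and that they generate a free submonoid. My first step is to attach to any finite generating set $G=\{g_1,\dots,g_k\}$ some numerical invariant that cannot grow under multiplication or under $\Sigma$, yet must become unboundedly large. The natural candidate is a notion of \emph{depth}, measuring how deeply an element sees the tree structure. Since every non-zero CQP monoid receives the unique homomorphism $\beta$-like image only in the universal case, I would instead work intrinsically, but it is cleaner to reduce to the universal CQP monoid $T$ first.

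The key reduction is this: by Theorem \ref{UniversalQ}, $T$ is initial, so there is a unique CQP homomorphism $\phi:T\to M$. I claim $\phi$ is surjective onto the CQP-submonoid generated by $\pi_1,\pi_2,\omega$, and more usefully, that if $M$ is finitely generated as a CQP monoid (equivalently, one checks, as a monoid, since $\Sigma$ and the $\pi_i$ enrich the structure), then it suffices to prove the statement for $T$ and transport non-finite-generation along the surjection $T\to M$. This transport is the subtle point: finite generation does not pass along surjections for free unless one controls the CQP structure. So I would instead argue directly in $M$. Define, for $m\in M$, the quantity $\dep(m)=\sup\{n : \text{every word }w\in F\text{ of length }n\text{ satisfies }wm\neq 1 \text{ in a suitable sense}\}$, tracking how many times one can \emph{left-apply} the $\pi_i$ before hitting the identity. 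The crucial algebraic fact is that $\omega$ strictly increases this depth while products by generators increase it by at most a bounded amount.

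Concretely, the main estimate I would establish is a \textbf{subadditivity} lemma: there is a function $\delta:G\to\NN$ such that for the appropriate depth invariant, $\dep(gm)\le \dep(m)+\delta(g)$ for each generator $g$, and hence $\dep(w)\le\sum_i n_i\,\delta(g_i)$ is controlled \emph{linearly} by the word length of any expression $w=g_{i_1}\cdots g_{i_r}$. On the other hand, iterating $\omega$ produces elements such as $\Sigma(\Sigma(1,1),\Sigma(1,1))$ and deeper nested trees whose depth grows without bound. Since $\omega$ itself lies in the submonoid generated by $G$ (as $M$ is finitely generated and $\omega=\Sigma(1,1)\in M$), every power-like nested build-up of $\omega$ via $\Sigma$ must be expressible in bounded word length relative to its depth, contradicting the linear bound once the depth exceeds $\max_i\delta(g_i)$ times the needed word length.

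\textbf{The hard part} will be pinning down a depth invariant that is simultaneously (a) well-defined on an arbitrary non-zero CQP monoid $M$ rather than just on $T$ where trees make it transparent, (b) genuinely subadditive under left multiplication by generators, and (c) provably unbounded. In $T$ the natural invariant is the maximal length of a root-to-leaf path, and Lemma \ref{Lem:Branch} together with the branch homomorphism $\beta:T\to S$ makes both the growth under $\Sigma$ and the control under multiplication visible; the real obstacle is that an \emph{arbitrary} $M$ may collapse tree depth through its relations, so I expect the argument must factor through the universal $T$ and then confront exactly the transport-of-finite-generation issue flagged above. Resolving that transport — showing that a finite monoid generating set for $M$ pulls back to enough control on $T$ to run the depth estimate — is where I anticipate the genuine difficulty, and it may require replacing the crude word-length bound with a more refined argument that exploits right distributivity $\Sigma(ac,bc)=\Sigma(a,b)c$ to normalize expressions before measuring their depth.
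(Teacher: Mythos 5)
The statement you are trying to prove is not a theorem in the paper at all: it is stated as Conjecture \ref{Conj:NotFG}, and the authors explicitly say that while Theorem \ref{Thm:Finite} provides evidence, ``it does not immediately give a proof.'' So there is no proof in the paper to compare against, and any correct argument you produced would be a genuine contribution. Unfortunately, your sketch does not close the gap; its central contradiction step is logically broken.

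The fatal problem is in the ``main estimate.'' You propose a depth invariant satisfying $\dep(gm)\le\dep(m)+\delta(g)$ for generators $g$, conclude that $\dep$ of any product of $r$ generators is at most $r\cdot\max_i\delta(g_i)$, and then try to contradict this with the fact that nested $\Sigma$-expressions built from $\omega=\Sigma(1,1)$ have unbounded depth. But these two facts are perfectly compatible: finite generation places no bound on word length, so elements of large depth are simply forced to have long expressions in the generators — that is not a contradiction, it is the expected behavior. An invariant that is subadditive under multiplication by generators and unbounded on the monoid exists in abundance on finitely generated monoids; word length itself is such an invariant on the free monoid $F$, which is finitely generated. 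Your argument template, applied verbatim to $F$ with $\dep=$ length, would ``prove'' that $F$ is not finitely generated, so the template cannot be valid. To get a genuine contradiction you would need something much stronger: either that depth is \emph{bounded} on all products of the generators, or that specific elements (say the nested $\omega$-trees, which in $T$ have depth $n$ but degree $2^n$) have depth growing super-linearly in their minimal word length over \emph{any} finite generating set. Nothing in the sketch establishes either, and the second is essentially the whole difficulty of the conjecture.

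A secondary error: your parenthetical claim that finite generation as a monoid is ``equivalently'' finite generation as a CQP monoid is false, and in fact its failure is the content of the conjecture. The universal CQP monoid $T$ is generated as a CQP monoid by the empty set (every tree is built from $1$, $\pi_1$, $\pi_2$ using the product and $\Sigma$), yet the conjecture asserts $T$ is not finitely generated as a monoid. Assuming that equivalence quietly assumes a statement at least as strong as what you are trying to prove. The paper's own closing remark — that a finitely generated CQP monoid would be generated by just $\pi_1,\pi_2$ and a single element $A$ obtained by $\Sigma$-combining the generators — is probably the right starting point for a real attack: one would need to show that no single $A\in M$ can, together with $\pi_1$ and $\pi_2$, reach every element, and your tree-depth intuition from $T$ does not survive the passage to an arbitrary quotient $M$, as you yourself flag in your last paragraph.
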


If $M$ is a CP monoid, then by Theorem \ref{Thm:Finite}, $M$ contains a copy of every finite monoid $N$. Thus, $M$ is very large indeed and we take this as evidence for the conjecture at least for CP monoids and those CQP monoids such that $M/\!\!\sim_{cp}$ is not trivial. However, there are only countably many finite monoids up to isomorphism, so it is possible to construct a monoid with just $2$ generators that contains a copy of every finite monoid. While we take Theorem \ref{Thm:Finite} as evidence for Conjecture \ref{Conj:NotFG}, it does not immediately give a proof.

Note that if $(M,\Sigma, \pi_1,\pi_2)$ is a CQP monoid generated as a monoid by $A_1,\dots , A_n$, then by repeatedly combining the $A_i$ in pairs through $\Sigma$, we can obtain a single $A\in M$ such that $w_iA=A_i$ for suitable words $w_i$ in $\pi_1$ and $\pi_2$. Thus, if $M$ is a finitely generated CQP monoid, then $M$ is generated by three elements $\pi_1,\pi_2,A$ for a suitable $A\in M$.

Assuming that the conjecture is correct, then there should still be good infinite generating sets for $U$ and for $U^*$. We consider a candidate.

For a CP monoid $(M,\Sigma,\pi_1,\pi_2)$, let $\mathbb{S}(M)$ be the set of $d$-CP monoid structures that can be recursively built using Lemma \ref{Lemma:Combine} starting from the 2-CP monoid structure $(\Sigma,\pi_1,\pi_2)$ and the 1-CP monoid structure $(\text{id},1)$. For example, the 3-CP monoid structures in $\mathbb{S}(M)$ are $(\Sigma(\Sigma\times\text{id}),\pi_1^2,\pi_2\pi_1,\pi_2)$ and $(\Sigma(\text{id}\times\Sigma),\pi_1,\pi_1\pi_2,\pi_2^2)$.

\begin{question} Let $E_d$ be the monoid of functions from $\{1,\dots ,d\}$ to itself. Is the universal CP monoid $U$ generated by the images of $E_d$, $2\le d<\infty$, under the antihomomorphisms determined by $(\Phi,\tau_1,\cdots ,\tau_d)\in\mathbb{S}(U)$ as in Proposition \ref{Prop:Anti}?
\end{question}

\begin{question} Is the unit group $U^*$ of the universal CP monoid $U$ generated by the images of $S_d$, $2\le d<\infty$, under the homomorphisms determined by $(\Phi,\tau_1,\cdots ,\tau_d)\in\mathbb{S}(U)$ as in Corollary \ref{Cor:FiniteUnits}?
\end{question}

Of course, the image of an element of $S_d$ is a unit of finite order. However, the subgroup of $U^*$ generated by these images does have elements of infinite order, such as

\[\xymatrix{
&&\circ\ar@{-}[ld]\ar@{-}[rd]&&&\circ\ar@{-}[ld]\ar@{-}[rd]&&&&\circ\ar@{-}[ld]\ar@{-}[rd]&\\
&\circ\ar@{-}[ld]\ar@{-}[rd]&&\pi_2\pi_1&\pi_2&&\pi_1&=&\circ\ar@{-}[ld]\ar@{-}[rd]&&\pi_2^2.\\
\pi_2&&\pi_1^2&&&&&\pi_1&&\pi_1\pi_2&
}\]

\end{document}